\renewcommand*{\eqref}[1]{%
  \hyperref[{#1}]{\textup{\tagform@{\ref*{#1}}}}%
}
\theoremstyle{plain}
 \newtheorem{thm}{Theorem}
 \newtheorem{prop}{Proposition}
 \newtheorem{lem}{Lemma}
 \newtheorem{cor}{Corollary}
\theoremstyle{definition}
 \newtheorem{exm}{Example}
\newtheorem{dfn}{Definition}
\numberwithin{equation}{section}
\def\ji {\char'032}
\def\ja {\char'037}
\def\m  {\char'176}
\font\srit=wncyi8
 \font\srrm=wncyr8
\newcommand{\R}{\mathbb{R}}
\newcommand{\T}{\mathbb{T}}
\DeclareMathOperator{\Span}{\mathrm{span\,}}
\renewcommand{\le}{\leqslant}
\renewcommand{\setminus}{\smallsetminus}
\title[Contact line bundles,  foliations, and integrability]{CONTACT LINE BUNDLES,  FOLIATIONS, AND INTEGRABILITY}
\subjclass[2020]{37J35, 37J55, 53D10, 53C12}
\author[Jovanovi\'c]{\bfseries Bo\v zidar Jovanovi\'c}
\address{
Mathematical Institute SANU \\
Serbian Academy of Sciences and Arts \\
Kneza Mihaila 36, 11000 Belgrade\\
Serbia}
\email{bozaj@mi.sanu.ac.rs}
\begin{document}

\begin{abstract}
We formulate the non-commutative integrability of contact systems on a contact manifold $(M,\mathcal H)$ using the Jacobi structure on the space of sections $\Gamma(L)$ of a contact line bundle $L$. In the cooriented case, if the line bundle is trivial and $\mathcal H$ is the kernel of a globally defined contact form $\alpha$, the Jacobi structure on the space of sections reduces to the standard Jacobi structure on $(M,\alpha)$.
We therefore treat contact systems on cooriented and non-cooriented contact manifolds simultaneously. In particular, this allows us to work with dissipative Hamiltonian systems where the Hamiltonian does not have to be preserved by the Reeb vector field.
\end{abstract}

\maketitle

\section{Introduction}

A \emph{contact form} $\alpha$ on a $(2n+1)$-dimensional manifold
$M$ is a 1--form that satisfies the condition $\alpha\wedge(d\alpha)^n\ne0$.
By a \emph{contact manifold} $(M,\mathcal H)$ we mean a connected
$(2n+1)$-dimensional manifold $M$ equipped with a non-integrable
{\it contact} (or \emph{horizontal})
\emph{distribution} $\mathcal H$ such that for any point
$x$ there exists an open neighborhood $U$ and a contact form $\alpha_U$ on $U$ that defines the distribution: $\mathcal
H\vert_U=\ker\alpha_U$.
A vector field $X$ is an \emph{infinitesimal automorphism} or a \emph{contact vector field}
if the corresponding local 1-parameter group $\phi^t$ consists of contact diffeomorphisms ($\phi^t_*\mathcal H=\mathcal H$).
With $\mathfrak X(M,\mathcal H)$ we denote the algebra of contact vector fields.
We refer to
\begin{equation}\label{eq2}
\dot x=X, \qquad X\in \mathfrak X(M,\mathcal H)
\end{equation}
as a \emph{contact system} on $(M,\mathcal H)$.
The concept of complete integrability in symplectic geometry, namely the Arnold--Liouville integrability \cite{Ar} and the non-commutative integrability introduced by Nekhoroshev \cite{N} and Mischenko and Fomenko \cite{MF, TF}, is introduced and studied for dynamical systems defined on manifolds with other geometric structures, such as Poisson manifolds, almost-symplectic, $b$-symplectic manifolds, Dirac manifolds, co-symplectic manifolds
(see e.g. \cite{LMV, BM, KT, Jo, FS, KM, JL, Zung}), and, in a particularly important class of contact manifolds.

If $\mathcal H$ is globally defined by a contact form $\alpha$, we say that $(M,\alpha)$ is a \emph{cooriented contact manifold}.
On $(M,\alpha)$ we can assign a contact Hamiltonian vector field $X_f$ to each smooth function $f\in\mathcal A(M)$ such that $f=\alpha(X_f)$.
The space of smooth functions $\mathcal A(M)$ on $(M,\alpha)$ is a Lie algebra with respect to the Jacobi brackets, which does not satisfy the Leibnitz rule (e.g. see Chapter V in \cite{LM}, we recall the basic definitions in Section \ref{sec2}).
The complete integrability of contact Hamiltonian vector fields using the Jacobi bracket, analogous to the Arnold--Liouville integrability \cite{Ar} with respect to the Poisson bracket, is introduced in \cite{BM}, while the non-commutative contact integrability, analogous to the Nekhoroshev-Mischenko-Fomenko integrability \cite{N, MF, BJ}, is given in \cite{Jo}.
Examples of contact integrable systems can be found in  \cite{KT, Boyer, JJ1, JJ2, JJ3, SS}.

Furthermore, in \cite{Jo}, using the framework of pre-isotropic contact structures, we introduced the concept of non-commutative integrability of the contact system
\eqref{eq2}
on contact manifolds $(M,\mathcal H)$, without requiring on $\mathcal H$ to be cooriented, where invariant toric fibrations are pre-isotropic (see Definition \ref{defJov}, Section \ref{sec2}). Toric fibrations are generated by \emph{commuting contact symmetries}
$\mathcal X=\{X_0=X,X_1,\dots,X_r\}$,
\[
[X_i,X_j]=0, \qquad X_i\in \mathfrak X(M,\mathcal H), \qquad i,j=0,\dots,r.
\]

In both cases, analogous to generalized action-angle variables in the case of Hamiltonian systems integrable in a non-commutative sense \cite{N}, there exist contact action-angle variables, i.e. a normal contact form defining the contact structure $\mathcal H$ in a toroidal domain of any regular torus with quasi-periodic dynamics (Section \ref{sec2}).

On the other hand, based on the concept of so-called non-Hamiltonian toric integrability (see \cite{Bo, Koz, Zu}),
Zung in \cite{Zung} defined the contact toric integrability simply by requiring sufficient integrals and commuting contact symmetries of the system \eqref{eq2}
(see Definition \ref{defZung}, Section \ref{sec2}). He also obtained suitable contact normal forms in a toroidal domain of any regular torus, but without the pre-isotropic assumption on toric fibration.
Note that commuting symmetries $\mathcal X$ generate a pre-isotropic foliation if and only if they are transversal to the contact distribution $\mathcal H$
(Lemma \ref{lemT}, Section \ref{sec2}).

The aim of this work is to relate approaches of \cite{Jo} and \cite{Zung}.
To this end, in Section \ref{sec3}, we use the identification of symmetries with the sections of contact line bundles.
Namely, we consider the contact structure $(M,\mathcal H)$ as a line bundle $L$ over $M$ together with a contact form $\alpha\in \Omega^1(L)$. The contact vector fields $\mathfrak X(M,\mathcal H)$ are naturally identified with the space of smooth sections of the line bundle $\Gamma(L)$: for each section $s\in\Gamma(L)$ there is the unique contact vector field $X_s$ such that
\[
s=\alpha(X_s).
\]

By analogy with the cooriented case, we also call $X_s$ the \emph{contact Hamiltonian vector field with respect to the Hamiltonian} $s$.
The space of sections carries the Jacobi structure $[\cdot,\cdot]$, and a section $s$ is the symmetry of the contact system
\begin{equation}\label{eq3}
\dot x=X_h,
\end{equation}
if $[s,h]=0$. For a given set of contact symmetries $\mathbf Y=\{s_0=h,s_1,\dots,s_p\}\subset \Gamma(L)$, one associates as a standard construction in the theory of line bundles (see \cite{GH}) the momentum map $\pi\colon M\setminus M_{\mathbf 0}\to \mathbb{RP}^p$,
\[
\pi(x)=[s_0(x),\dots,s_p(x)], \qquad x\in M\setminus M_{\mathbf 0},
\]
where $M_{\mathbf 0}$ is the intersection of all zero divisors $(s_i)$ of the symmetries $\mathbf Y$.
The fibers of the momentum map as well as the zero-locus $M_{\mathbf 0}$ are invariant varieties of the contact flow of the system \eqref{eq3}
(Theorem \ref{pomocna}, Section \ref{sec3}).

Following Nekhoroshev \cite{N}, and Mischenko and Fomenko \cite{MF, TF}, we formulate contact non-commutative contact integrability in terms of the sections of the contact line bundles and the given Jacobi structure such that the fibers of the momentum map $\pi$ are generated by Hamiltonian vector fields with commuting symmetries $\mathbf X\subset \mathbf Y$. Then the compact regular components of the fibers of $\pi$ are $(r+1)$-dimensional tori with quasi-periodic dynamics ($r+1=\dim\mathbf X=2n+1-p$). Furthermore, regular compact components of the  zero-locus $M_{\mathbf 0}$ are $r$-dimensional tori with quasi-periodic dynamics, which are also generated by $\mathbf X$ (Theorem \ref{LIN}, Section \ref{sec3}). In Example \ref{Primer} (Section \ref{sec4}) and Example \ref{Primer2} (Section \ref{sec5})
we provide simple contact integrable systems on
the projective space bundle of the cotangent bundle of the torus $\mathbb T^{n+1}$
 with compact and non-compact invariant manifolds.

We prove that the Hamiltonian vector fields of symmetries $\mathbf Y$ generate an integrable distribution and describe the associate flag of the foliations $\mathcal F\subset\mathcal E$, where $\mathcal F$ is induced by $\mathbf X$ and $\mathcal E$ by $\mathbf Y$
(Theorem \ref{foliacije}, Section \ref{sec4}). In general:
\footnote{In this paper, we use the same symbol to denote the foliation and the associated integrable distribution of the tangent bundle. Furthermore, $\mathcal F_x\subset T_xM$ denotes the tangent space of the leaf through $x$.}
\[
\dim\mathcal E_x=p+1, \qquad \dim\mathcal F_x=r+1, \qquad x\in M \setminus (\Sigma \cup M_{\mathbf 0}),
\]
where $\Sigma$ is a $p$-dimensional invariant subvariety of $M\setminus M_{\mathbf 0}$, where $\mathcal F$ is not transversal to the contact distribution $\mathcal H$.
Furthermore, $\Sigma$ is the singular leaf of $\mathcal E$:
\begin{align*}
\dim\mathcal E_x = p, \qquad \mathcal F_x \subset \mathcal H_x, \qquad \dim\mathcal F_x=r+1, \qquad T_x\Sigma =\mathcal E_x, \qquad x\in\Sigma.
\end{align*}
Similarly, the zero locus $M_\mathbf 0$ can also be defined as a set of points where the foliation $\mathcal E$ is tangent to the contact distribution $\mathcal H$ and we have
\[
\dim\mathcal E_x=p, \qquad \dim\mathcal F_x=r, \qquad \mathcal F_x \subset \mathcal E_x\subset \mathcal H_x, \qquad \quad x\in M_{\mathbf 0}.
\]

This allows us to compare constructions of \cite{Jo} and \cite{Zung} within a unified geometric framework:
on $M$ we have integrability with respect to Theorem \ref{LIN},
on $M\setminus M_{\mathbf 0}$ we have toric contact integrability in the sense of \cite{Zung}, and on $M\setminus (\Sigma \cup M_{\mathbf 0})$ we have a complete pre-isotropic contact structure and noncommutative contact integrability in the sense of \cite{Jo}.

Note that in the cooriented case, when the line bundle is trivial, the Jacobi structure in the space of sections reduces to the standard Jacobi structure.
Therefore, we simultaneously treat contact systems on cooriented and non-cooriented contact manifolds.
This allows us to work with dissipative Hamiltonian systems where the Hamiltonian does not need to be preserved by the original Reeb vector field \cite{deLeon}.
For simplicity, we have considered the commutative case when $\Sigma=\emptyset$
(Theorem \ref{komutativna}, Section \ref{sec5}).
Recently, the statement analogous to item (ii) of  Theorem \ref{komutativna} was given in \cite{Col}, with the proof based on the symplectization of contact manifolds and the Liouville-Arnold theorem for exact symplectic manifolds for commuting integrals which are homogeneous of degree 1.

\section{Toric integrability and complete pre-isotropic contact structures}\label{sec2}

In this section,  following \cite{Jo, Zung}, we recall on the concepts of contact integrability on contact manifolds
and complete pre-isotropic contact structures.

\subsection{Toric integrability}\label{toric}

A general concept of the so-called non-Hamiltonain integrabity of the equation \eqref{eq2}, following \cite{Bo, Koz, Zu},
is given by the following conditions

\begin{itemize}

\item[(i)] The equations \eqref{eq2} have first integrals $f_1,\dots,f_p$, independent on an open dense
subset $M_{reg}\subset M$, which defines a proper submersion
\begin{equation*}\label{sub*}
\pi=(f_1,\dots,f_p)\colon M_{reg}\longrightarrow
W\subset \R^p.
\end{equation*}

\item [(ii)] There is an abelian Lie algebra
$\mathcal X=\Span\{X_0,X_1,\dots,X_{r}\}$ of symmetries,
\[
[X_i,X_j]=0, \qquad i,j=0,\dots,r,
\]
so that the fibers of $\pi$ are orbits of $\mathcal X$ ($r=2n-p$).

\end{itemize}

From (i) and (ii) we have:
$M_{reg}$ is foliated on $(r+1)$--dimensional tori with linear dynamics, which can be determined by quadratures.
Let us consider a connected component $M_{\mathbf c}^0$ of the invariant manifold $M_{\mathbf c}=\pi^{-1}({\mathbf c})\cap M_{reg}$.
There exists a $\pi$--invarinat toroidal neighborhood $U$ of $M_{\mathbf c}^0$ with Liouville coordinates
$(\varphi_0 (\text{mod }2\pi),\dots,\varphi_{r}(\text{mod }2\pi),z_1,\dots,z_p)$ such that invariant tori are the level sets of coordinates $z_1,\dots,z_p$
($z_i=z_i(f_1,\dots,f_p)$) and the vector fields $X_0,\dots,X_{r}$ are linear
\[
X_i=\sum_{j=0}^{r} \Omega_{ij}(z_1,\dots,z_p)\frac{\partial}{\partial\varphi_j}, \qquad i=0,\dots,r,
\]
where the matrix of frequencies $(\Omega_{ij})$ is invertible (see the construction of the angle coordinates in \cite{Ar}).
\footnote{Note: If we relax the condition that the algebra $\mathcal X$ is abelian by the condition that it is solvable, we still have the solvability of the equation by quadratures \cite{Koz, Koz2, CFG}.}

We have the natural induced action of $\mathbb T^{r+1}$ on $U$ by the translations in angle coordinates.
According to Theorem 2.2 \cite{Zung}
any $\mathcal X$--invariant tensor field $T$
is also invariant with respect to the torus action, motivating
the following natural definition of contact integrability (see Zung \cite{Zung}).

\begin{dfn} \label{defZung}
We say that the contact system \eqref{eq2} is
\emph{toric contact integrable} if it has a set of integrals and commuting contact symmetries satisfying conditions (i), (ii), and
\begin{itemize}

\item [(iii)] The symmetries $\mathcal X$ are contact: $X_k\in \mathfrak X(M,\mathcal H)$, $k=0,\dots,r$.

\end{itemize}
\end{dfn}

Then it follows that there exist a toroidal neighborhood $U$ of the invariant torus $M_{\mathbf c}^0$
with Liouville coordinates $(\varphi,z)$ and a local $\mathbb T^{r+1}$-invariant \emph{normal contact 1-form} $\alpha_U$ which defines $\mathcal H\vert_U$
\begin{equation}\label{NCF}
\alpha_U=\sum_{k=0}^r a_k(z)d\varphi_k +\sum_{i=1}^p b_i(z) dz_i,
\end{equation}
where $a_k(z),b_i(z)$ are smooth functions (see Proposition 4.2 \cite{Zung}).

From the conditions (i) and (ii) we do not have restriction on the dimension of invariant tori.
However, with the addition of the assumption (iii), from \eqref{NCF} we get that
the two-form $d\alpha_U$ vanishes on invariant tori implying
the inequality  $r=2n-p\le n$  ($\alpha_U\wedge(d\alpha_U)^n\ne0$).

Note that under the additional general assumption that $\mathcal X$ is transversal to $\mathcal H$, the normal contact form \eqref{NCF} can be further simplified (see equation \eqref{canonical**} below).

\subsection{Contact Hamiltonian vector fields and Jacobi brackets}

In what follows we mainly follow the notation of Libermann and Marle \cite{LM}.
On a cooriented contact manifold $(M,\alpha)$ we have
a $\mathbb R$-linear isomorphism $\Phi_\alpha$ between contact vector fields and the space of smooth $\mathbb R$-valued functions $\mathcal A(M)$:
\begin{equation}\label{isoA}
\Phi_\alpha\colon \mathfrak X(M,\mathcal H)\to \mathcal A(M), \quad \Phi_\alpha(X)=\alpha(X).
\end{equation}

The inverse image of $f\in\mathcal A(M)$ is called the \emph{contact Hamiltonian vector field} $X_f$ with the Hamiltonian function $f$.
The mapping \eqref{isoA} is a Lie algebra isomorphism, where
$\mathfrak X(M,\mathcal H)$ is endowed with the usual bracket of vector fields and $\mathcal A(M)$ with the \emph{Jacobi bracket} defined by
\begin{equation*}
[f_1,f_2]_\alpha=\Phi_\alpha([X_{f_1},X_{f_2}]) \quad \Longleftrightarrow \quad X_{[f_1,f_2]_\alpha}=[X_{f_1},X_{f_2}].
\end{equation*}

The contact Hamiltonian vector fields can be described as follows.
The \emph{Reeb vector field} $Z_\alpha$ on $(M,\alpha)$ is the vector field transverse to $\mathcal H$, defined by
$$
\alpha(Z_\alpha)=1, \qquad i_{Z_\alpha} d\alpha=0.
$$
The tangent bundle $TM$ and the cotangent bundle $T^*M$ are decomposed into
\begin{equation}
TM=\mathcal Z \oplus \mathcal H, \qquad T^*M=\mathcal Z^0\oplus
\mathcal H^0, \label{decomposition}
\end{equation}
where $\mathcal Z=\R Z_\alpha$ is the kernel of
$d\alpha$, $\mathcal Z^0$ and $\mathcal H^0=\R\alpha$ are the annihilators of $\mathcal Z$ and $\mathcal H$ respectively.
According to \eqref{decomposition} we have decompositions of vector fields and 1-forms
\begin{equation}
X=\alpha(X)Z_\alpha+\hat X, \qquad \eta=\eta(Z_\alpha)\alpha+\hat \eta,
\label{decomp*}
\end{equation}
where $\hat X\in\Gamma(\mathcal H)$ and $\hat\eta\in\Gamma(\mathcal Z^0)$.
The mapping $\alpha^\flat: X \mapsto -i_X d\alpha$ is an isomorphism between $\Gamma(\mathcal H)$ and $\Gamma(\mathcal Z^0)$,
whose inverse is denoted by $\alpha^\sharp$.

Now the Hamiltonian vector field $X_f$ takes the form
$$
X_f=\Phi_\alpha^{-1}(f) = f Z_\alpha+\alpha^\sharp(\hat{df})
=f Z_\alpha+\alpha^\sharp({df}-df(Z_\alpha)\alpha)
$$
and
$
\mathcal L_{X_f}\alpha=Z_\alpha(f)\alpha.
$
Note that $Z_\alpha=X_1$. Furthermore, the Jacobi bracket reads:
\begin{equation}
\label{JBN}
[f_{1},f_{2}]_\alpha = d\alpha(X_{f_1},X_{f_{2}})+f_{1}{Z_\alpha}(f_{2})-f_{2}Z_\alpha(f_{1})= X_{f_{1}}(f_{2})-f_{2}Z_\alpha (f_{1}).
\end{equation}

\begin{exm}\label{canonical} The basic example of a  co-oriented contact manifold is a linear space
$\R^{2n+1}(q_0,q_1,\dots,q_n,p_1,\dots,p_n)$ with a canonical contact form
$\alpha=dq_0+\sum_{i=1}^n p_i dq_i$. Then the Reeb vector field $Z_\alpha=\frac{\partial}{\partial q_0}$
and the contact Hamiltonian equations for a Hamiltonian $f$ are
\begin{align}\label{dissipative}
\dot q_0=f-\sum_{i=1}^n p_i\frac{\partial f}{\partial p_i},\quad
\dot q_i=\frac{\partial f}{\partial p_i},\quad \dot p_i=-\frac{\partial f}{\partial q_i}+p_i\frac{\partial f}{\partial q_0}, \quad i=1,\dots,n.
\end{align}

The Jacobi bracket reads:
\begin{equation*}
[f,g]= \sum_{i=1}^n\big(\frac{\partial f}{\partial p_i}\frac{\partial g}{\partial q_i}
-\frac{\partial g}{\partial p_i}\frac{\partial f}{\partial q_i}\big)+\frac{\partial g}{\partial q_0}\big(f-\sum_{i=1}^n p_i\frac{\partial f}{\partial p_i}\big)-\frac{\partial f}{\partial q_0}\big(g-\sum_{i=1}^n p_i\frac{\partial g}{\partial p_i}\big).
\end{equation*}

According to Darboux's theorem for contact manifolds, every contact manifold $(M,\mathcal H)$ has local canonical coordinates $(q_0,q_1,\dots,q_n,p_1,\dots,p_n)$ in which the local contact form $\alpha$ has the canonical form (see \cite{LM}).
Note that from the point of view of Hamiltonian dynamics, the equations \eqref{dissipative} describe a class of dissipative Hamiltonian systems, see e.g. \cite{deLeon}.
\end{exm}

\subsection{Pre-isotropic foliations and integrability}\label{PIF}

Let $\mathcal F$ be a foliation on a cooriented contact manifold
$(M^{2n+1},\alpha)$. The {\it pseudo-orthogonal distribution}
$\mathcal F^\perp$ locally generated by the Reeb vector field $Z_\alpha$ and the contact
Hamiltonian vector fields, which correspond to the first integrals of $\mathcal F$. A foliation $\mathcal F$ is called \emph{$\alpha$-complete} if for each pair $f_1,f_2$ of first integrals of $\mathcal F$ (where
$f_i$ can be a constant), the bracket $[f_1,f_2]_\alpha$ is also a first integral of $\mathcal F$ (possibly a constant). A foliation $\mathcal F$ that contains the
Reeb vector field $Z_\alpha$ is $\alpha$-complete if the pseudo-orthogonal subbundle $\mathcal F^\perp$ is integrable and defines a foliation which is also $\alpha$-complete and
$(\mathcal F^\perp)^\perp=\mathcal F$ \cite{L}. Then for each pair of integrals $f,g$ of $\mathcal F$ and $\mathcal F^\perp$,
respectively $[f,g]_\alpha=0$ \cite{L}. Furthermore, we say that a foliation $\mathcal F$ is {\it pre-isotropic} if

\begin{itemize}

\item[(i)] $\mathcal F$ is transversal to $\mathcal H$

\item[(ii)] $\mathcal G=\mathcal F \cap\mathcal H$ is an isotropic subbundle of the symplectic vector bundle $(\mathcal H,d\alpha)$:
\[
\mathcal G_x\subset \mathcal G_x^{d\alpha\vert_\mathcal H}=\{\xi\in\mathcal H_x\,\vert\, d\alpha(\xi,\mathcal G_x)=0\}.
\]
\end{itemize}

Note that condition (ii) is equivalent to the condition that $\mathcal
G=\mathcal F\cap \mathcal H$ is a foliation (Lemma 3.1 \cite{Jo}).
The geometric framework for non-commutative integrability on cooriented contact manifolds is motivated by the following statement (Theorem 3.2 \cite{Jo}).

\begin{prop}\label{STAV} Let $\mathcal F$ be a pre-isotropic foliation containing the Reeb vector field $Z_\alpha$. Then:

(i) We have a flag of the distribution:
\begin{equation}\label{flag}
\mathcal G=\mathcal F \cap \mathcal H \,\,\subset\,\, \mathcal F
\,\,\subset \,\,\mathcal E=\mathcal G^\perp=\mathcal F^\perp.
\end{equation}


(ii) The foliation $\mathcal F$ is
$\alpha$-complete if and only if $\mathcal E$ is completely integrable. Let us assume that $\mathcal E$ is integrable and that
$f_1,\dots,f_p$ and $y_1,\dots,y_r$, $2n-p=r$ are arbitrary sets of local integrals of $\mathcal F$ or $\mathcal E$. Then:
$[f_i,y_j]_\alpha=0$, $[y_j,y_k]_\alpha=0$, $[f_i,1]_\alpha=0$, $[y_i,1]_\alpha=0$.

(iii) Each leaf of a $\alpha$-complete pre-isotropic foliation
$\mathcal F$ and each leaf of the corresponding isotropic foliation $\mathcal G$ has an affine structure.
\end{prop}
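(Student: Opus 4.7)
The plan is to establish (i) from the definitions and a rank count, derive (ii) by combining the pseudo-orthogonal calculus of \cite{L} with the explicit Jacobi bracket \eqref{JBN}, and read (iii) off from the commuting symmetries produced along the way.

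\textbf{Part (i).} First I would use transversality and $Z_\alpha\in\mathcal F$ to split $\mathcal F=\mathbb R Z_\alpha\oplus\mathcal G$, so $\dim\mathcal G=r$ and $p=2n-r$. For any local first integral $f$ of $\mathcal F$ the relation $Z_\alpha(f)=0$ simplifies \eqref{decomp*} to $X_f=fZ_\alpha+\alpha^\sharp(df)$, and $d\alpha(\alpha^\sharp(df),\xi)=-df(\xi)=0$ for $\xi\in\mathcal G$ places the horizontal part of $X_f$ in the symplectic orthogonal $\mathcal G^{d\alpha\vert_\mathcal H}$. Since $Z_\alpha=X_1$, this gives $\mathcal F^\perp\subset\mathbb R Z_\alpha\oplus\mathcal G^{d\alpha\vert_\mathcal H}$; both sides have rank $p+1$ (from $p$ independent local integrals plus the Reeb field on the left, and from isotropy of $\mathcal G$ in the $2n$-dimensional $(\mathcal H,d\alpha)$ on the right), so equality holds and $\mathcal E=\mathcal F^\perp=\mathcal G^\perp$. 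The flag \eqref{flag} then follows from $\mathcal G\subset\mathcal G^{d\alpha\vert_\mathcal H}$ (isotropy) and $Z_\alpha\in\mathcal F\cap\mathcal E$.

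\textbf{Part (ii).} The direction ``$\mathcal F$ $\alpha$-complete $\Rightarrow$ $\mathcal E$ integrable'' is part of Libermann's theorem recalled at the start of the subsection. For the converse I would work locally: pick $r$ independent first integrals $y_1,\dots,y_r$ of $\mathcal E$. Since $Z_\alpha\in\mathcal E$, one has $Z_\alpha(y_j)=0$, and \eqref{JBN} reduces to $[y_i,y_j]_\alpha=X_{y_i}(y_j)=dy_j(\hat X_{y_i})$; this vanishes because $\hat X_{y_i}\in\mathcal E\cap\mathcal H$ by the same calculation as in (i). The same argument applied to any first integral $f$ of $\mathcal F$ gives $[f,y_j]_\alpha=dy_j(\hat X_f)=0$ and $[f,1]_\alpha=-Z_\alpha(f)=0$. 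Passing to vector fields via $X_{[g,h]_\alpha}=[X_g,X_h]$, the $r+1$ fields $Z_\alpha,X_{y_1},\dots,X_{y_r}$ are commuting contact symmetries and, by the rank argument of (i), span $\mathcal E^\perp$, which therefore equals $\mathcal F$; the same bracket identities imply $\alpha$-completeness of $\mathcal E$. The three hypotheses of Libermann's theorem now hold, so $\mathcal F$ is $\alpha$-complete.

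\textbf{Part (iii) and main obstacle.} With (ii) in hand, the local $\mathbb R^{r+1}$-action by the commuting contact symmetries $Z_\alpha,X_{y_1},\dots,X_{y_r}$ is transitive on each leaf of $\mathcal F$ and supplies the canonical affine structure. For the leaves of $\mathcal G$ I would note that the horizontal vector fields $\hat X_{y_j}=X_{y_j}-y_jZ_\alpha$ lie in $\mathcal G$ pointwise, span it by rank, and still pairwise commute (a short computation using $[X_{y_i},X_{y_j}]=0$, $[Z_\alpha,X_{y_j}]=0$, $Z_\alpha(y_j)=0$), so they provide an $r$-dimensional commutative Lie algebra of tangent fields along each $\mathcal G$-leaf and hence the desired affine structure. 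The step I expect to be most delicate is the converse in (ii): turning integrability of $\mathcal E$ into $\alpha$-completeness of $\mathcal F$ requires first establishing $\mathcal E^\perp=\mathcal F$, which uses the reflexivity $(\mathcal G^{d\alpha\vert_\mathcal H})^{d\alpha\vert_\mathcal H}=\mathcal G$ of the symplectic orthogonal, and only then can Libermann's theorem be applied in reverse to close the loop.
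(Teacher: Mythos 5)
Your proof is correct. Note that the paper itself offers no proof of Proposition~\ref{STAV}: it is quoted verbatim as Theorem~3.2 of \cite{Jo}, so there is no in-text argument to compare yours against line by line. That said, your route is exactly the one the paper's own toolkit suggests and uses elsewhere (e.g.\ in the proofs of Lemma~\ref{lemT} and Theorem~\ref{LIN}): the splitting $\mathcal F=\R Z_\alpha\oplus\mathcal G$ from transversality, the decomposition \eqref{decomp*} with $\hat{df}=df$ for Reeb-invariant integrals, the bracket formula \eqref{JBN}, the rank count in the spirit of Lemma~\ref{LEMA}, and symplectic reflexivity $(\mathcal G^{d\alpha\vert_{\mathcal H}})^{d\alpha\vert_{\mathcal H}}=\mathcal G$ to close the loop $\mathcal E^\perp=\mathcal F$. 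Two places are compressed but harmless: (a) you attribute the implication ``$\mathcal F$ $\alpha$-complete $\Rightarrow\mathcal E$ integrable'' to Libermann, whereas the paper only quotes the converse sufficient condition; this direction is anyway immediate from $X_{[f,g]_\alpha}=[X_f,X_g]$, the fact that $[f,g]_\alpha$ is again a first integral of $\mathcal F$, the constant rank $p+1$ established in (i), and Frobenius, and it would be cleaner to say so. (b) In (iii) one should add the standard remark that the transition maps between the charts furnished by two choices of local integrals $y_j$ are affine along each leaf (the change-of-frame coefficients are functions of the integrals, hence leafwise constant), so the local $\R^{r+1}$- and $\R^{r}$-actions really glue to a global affine structure on each leaf of $\mathcal F$ and of $\mathcal G$.
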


Since $\mathcal F\subset \mathcal E$, we can take local integrals $f_1,\dots,f_p$ of $\mathcal F$ such that
$f_1,\dots,f_r$ are integrals of $\mathcal E$. Then
\begin{equation}\label{involucija}
[1,f_i]_\alpha=0, \quad [f_i,f_j]_\alpha=0, \quad i=1,\dots,p, \quad j=1,\dots, r, \quad r+p=2n.
\end{equation}

Consider the contact Hamiltonian equation
\begin{equation}\label{eq_f}
\dot x=X_h
\end{equation}
with the Hamiltonian $h\in\mathcal A(M)$. For the contact Hamiltonain vector fields that commute with the Reeb vector field $[X_h,Z_\alpha]_\alpha=0$
(or equivalently for the Hamiltonians $H$ which are integrals of the Reeb flow $Z_\alpha(h)=0$), we have the following theorem on non-commutative integrability (see Theorem 5.2 and Remark 5.3 in \cite{Jo}).

\begin{thm}\label{teo1}
Suppose we have a collection of integrals
\[
\pi=(f_1,f_2,\dots,f_{p})\colon M \longrightarrow \R^p
\]
of the equation \eqref{eq_f} with the contact Hamiltonian either
$h=f_1$ or $h=1$, so that \eqref{involucija} holds.
Let $M_{reg}$ be the open subset of $M$ such that $df_1 \wedge \dots \wedge df_{p}\ne 0$.
\footnote{Note that, since $df_i(Z_{\alpha})=0$, the condition $df_1 \wedge \dots \wedge df_{p}\ne 0$
is equivalent to $df_1 \wedge \dots \wedge df_{p}\wedge \alpha\ne 0$ (see Lemma \ref{LEMA}).}
Then the invariant manifolds of the mapping $\pi$
 define $\alpha$--complete pre-isotropic foliations $\mathcal F$ on $M_{reg}$ and
\[
\mathcal X=\Span\{Z_{\alpha}, X_{f_1},\dots,X_{f_r}\}
\]
is an abelian Lie algebra of contact symmetries which has the fibers of $\pi$ as orbits.
Let $M_{\mathbf c}^0$ be a compact connected component of $\pi^{-1}(\mathbf c)\cap M_{reg}$.
Then $M_{\mathbf c}^0$ is diffeomorphic to a $(r+1)$--dimensional torus
$\mathbb T^{r+1}$. There exists an open $\pi$-invariant neighborhood $U$ of $M_{\mathbf c}^0$ with Liouville coordinates
\begin{equation}\label{action-angle}
(\varphi,I,q,p)=(\varphi_0,\varphi_1,\dots,\varphi_r,I_1,\dots,I_r,q_1,\dots,q_{s},p_1,\dots,p_s),
\quad 2s=p-r,
\end{equation}
so that the contact form $\alpha$ has the following canonical form
\begin{equation}\label{canonical*}
\alpha=I_0d\varphi_0+I_1d\varphi_1+\dots+I_rd\varphi_r+p_1dq_1+\dots+p_{s}dq_{s},
\end{equation}
with $I_k=I_k(f_1,\dots,f_r)$, $k=0,\dots,r$, $q_i=q_i(f_1,\dots,f_p)$, $p_i=p_i(f_1,\dots,f_p)$, $i=1,\dots,s$.
The flow of $X_h$ on invariant tori is quasi-periodic
\begin{equation}\label{namotavanje}
(\varphi_0,\varphi_1,\dots,\varphi_r) \longmapsto
(\varphi_0+t\omega_0,\varphi_1+t\omega_1,\dots,\varphi_r+t\omega_r).
\quad t\in\R,
\end{equation}
The Hamiltonian $h$ and the frequencies $\omega_0,\dots,\omega_r$ depend only on $I_1,\dots,I_r$.
\end{thm}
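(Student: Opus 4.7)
The plan is to reduce the statement to Proposition \ref{STAV} for the foliation $\mathcal F$ cut out by $\pi$, recognize $\mathcal X$ as the infinitesimal generators of a torus action on compact level sets, and then build the Liouville coordinates by combining the classical Arnold--Liouville argument along the angles with a symplectic reduction in the transverse directions.

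First I would verify that $\mathcal F$ is an $\alpha$--complete pre-isotropic foliation. The formula \eqref{JBN} gives $[1,f_i]_\alpha=Z_\alpha(f_i)$, so the first half of \eqref{involucija} forces $Z_\alpha(f_i)=0$; hence $Z_\alpha$ is tangent to $\mathcal F$ and $\mathcal F$ is automatically transversal to $\mathcal H$. For isotropy of $\mathcal G=\mathcal F\cap\mathcal H$, the decomposition $X_{f_i}=f_iZ_\alpha+\alpha^\sharp(df_i)$ shows that the horizontal parts span $\mathcal G$ on $M_{reg}$, and since $Z_\alpha\in\ker d\alpha$ together with $Z_\alpha(f_i)=0$, formula \eqref{JBN} reduces to $d\alpha(X_{f_i},X_{f_j})=[f_i,f_j]_\alpha$, which vanishes for $i,j\le r$ by \eqref{involucija}. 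The remaining bracket relations in \eqref{involucija} give $\alpha$--completeness, so Proposition \ref{STAV} applies and the flag \eqref{flag} is available.

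Next, $\mathcal X$ is an abelian algebra of contact symmetries tangent to the fibers of $\pi$: commutativity follows from $[Z_\alpha,X_{f_i}]=X_{[1,f_i]_\alpha}=0$ and $[X_{f_i},X_{f_j}]=X_{[f_i,f_j]_\alpha}=0$ for $i,j\le r$, while tangency follows from $X_{f_i}(f_j)=[f_i,f_j]_\alpha+f_jZ_\alpha(f_i)=0$ for $i\le r,\ j\le p$. Independence on $M_{reg}$ is forced by the independence of $df_1,\dots,df_r$ together with $Z_\alpha\notin\mathcal H$ and $\alpha^\sharp(df_i)\in\mathcal H$, so $\dim\mathcal X=r+1=\dim\mathcal F$ and the orbits of $\mathcal X$ coincide with the connected components of the fibers of $\pi\vert_{M_{reg}}$. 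Standard Arnold--Liouville theory then identifies a compact component $M_{\mathbf c}^0$ with $\R^{r+1}/\Lambda\cong\T^{r+1}$ for a full-rank stabilizer lattice $\Lambda$; pulling back a basis of $\Lambda$ over a small base produces angle coordinates $(\varphi_0,\dots,\varphi_r)$ on a $\pi$--invariant tubular neighborhood $U$ in which every element of $\mathcal X$ has constant coefficients, yielding the quasi-periodic flow \eqref{namotavanje} in both cases $h=f_1$ and $h=1=\alpha(Z_\alpha)$.

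The main obstacle is producing the normal form \eqref{canonical*}. Here I would exploit the flag \eqref{flag}: the local $\T^{r+1}$-quotient $\bar U$ has dimension $p-r=2s$ and, after averaging $d\alpha\vert_{\mathcal H}$ over the torus fibres, carries a canonical symplectic form; Darboux coordinates on $\bar U$ lift to $\T^{r+1}$--invariant functions $q_1,\dots,q_s,p_1,\dots,p_s$ on $U$. The action-type functions $I_k(f_1,\dots,f_r)$ are produced as contact period integrals $I_k=(2\pi)^{-1}\oint_{\gamma_k}\alpha$ over basic cycles of the torus; a direct computation then shows that the 1-form $\alpha-\sum_k I_k\,d\varphi_k-\sum_i p_i\,dq_i$ is closed and has vanishing periods on every torus, hence exact, and can be absorbed into a gauge adjustment of one of the angle coordinates to yield \eqref{canonical*}. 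The dependence of $h$ and of the frequencies $\omega_0,\dots,\omega_r$ only on $I_1,\dots,I_r$ then follows immediately from reading Hamilton's equations in the coordinates $(\varphi,I,q,p)$.
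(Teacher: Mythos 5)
The paper does not prove Theorem \ref{teo1}: it is quoted from \cite{Jo} (Theorem 5.2 and Remark 5.3), so your argument has to stand on its own. Its first half essentially does, and follows the standard route: deducing $Z_\alpha(f_i)=0$ from $[1,f_i]_\alpha=0$ via \eqref{JBN}, transversality of $\mathcal F$ to $\mathcal H$, isotropy of $\mathcal G$ from $[f_i,f_j]_\alpha=d\alpha(X_{f_i},X_{f_j})$, commutativity and fiber-tangency of $\mathcal X$, independence, and the Arnold--Liouville identification of compact fiber components with $(r+1)$-tori. Two points are glossed over: $\alpha$-completeness is \emph{not} contained in \eqref{involucija}, which says nothing about $[f_i,f_j]_\alpha$ for $i,j>r$; you must show these brackets are again first integrals of $\mathcal F$, which follows from the Jacobi identity ($[f_k,[f_i,f_j]]=[[f_k,f_i],f_j]+[f_i,[f_k,f_j]]=0$ for $k\le r$, and similarly with $f_k$ replaced by $1$). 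Also, that the $\mathcal X$-orbits exhaust the connected fiber components needs the clopen-orbit argument.

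The genuine gap is the normal form \eqref{canonical*}, which is the substantive claim. First, the $\T^{r+1}$-quotient of $U$ has dimension $2n+1-(r+1)=p$, not $p-r=2s$; the $2s$-dimensional symplectic quotient is that of a leaf of $\mathcal E$ (a level set of the actions). More importantly, $d\alpha$ does not descend to the leaf space of $\mathcal F$ at all, since $i_{X_{f_i}}d\alpha=-df_i\ne 0$, so ``averaging $d\alpha\vert_{\mathcal H}$'' does not produce the form you need on the quotient. The actual work is: write the invariant form as $\alpha=\sum_k a_k(f)\,d\varphi_k+\theta$ with $\theta$ basic, identify the $a_k$ with the periods \eqref{actions}, and prove from $\alpha\wedge(d\alpha)^n\ne0$ together with the flag \eqref{flag} that $\rank\{da_0,\dots,da_r\}=r$, that $d\theta$ has constant rank $2s$, and that $\ker d\theta$ is integrable and complementary to the $(q,p)$-directions; only then does a presymplectic Darboux argument give $q_i,p_i$ as functions of $f_1,\dots,f_p$. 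Your last step is also circular and, as stated, does not go through: the closedness of $\alpha-\sum_k I_kd\varphi_k-\sum_i p_idq_i$ presupposes $d\theta=\sum_i dp_i\wedge dq_i$, i.e.\ the Darboux statement itself; and the leftover exact term $dS$ cannot in general be ``absorbed into a gauge adjustment of one of the angle coordinates,'' because a shift $\varphi_k\mapsto\varphi_k+c_k(f)$ changes $\alpha$ by $\sum_k I_k\,dc_k$, which is exact only when the $c_k$ (hence $S$) depend on the actions alone. One must first refine the Darboux step so that $\theta-\sum_i p_idq_i=dS(I_1,\dots,I_r)$, and only then kill $S$ by an angle shift (solving $T-\sum_k I_k\partial_{I_k}T=S$). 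These are exactly the steps carried out in \cite{Jo}; without them the proof of \eqref{canonical*} is incomplete.
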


In analogy to the non-commutative integrability in Hamiltonian dynamics (see \cite{N, MF}), we denote the coordinates $(\varphi,I,q,p)$ given in Theorem \ref{teo1}
 as (generalised) \emph{contact action-angle} coordinates of the contact form $\alpha$.
$\mathcal F\vert_U$ is locally defined by the integrals $I_1,\dots,I_r,q_1,\dots,q_{s},p_1,\dots,p_s$, while the integrals of the pseudo-orthogonal foliation $\mathcal E\vert_U$ are $I_1,\dots,I_r$.
As in the Hamiltonian case, let $\gamma_k(T)$ be a cycle that is homologous to the trajectories of the field
${\partial}/\partial\varphi_{k}$ restricted to an arbitrary invariant torus $T$ within $U$. Then
\begin{equation}\label{actions}
I_k\vert_T=\frac{1}{2\pi}\int_{\gamma_k(T)} \alpha, \qquad k=0,\dots,r.
\end{equation}

If $r=n$ ($s=0$) we have commutative integrability \cite{BM}.

\subsection{Complete pre-isotropic contact structures}

Let us now consider a general case in which $\mathcal H$ need not be cooriented.
Although we do not have a Reeb vector field, the concept of pre-isotropic and isotropic foliations $\mathcal F\subset TM$, $\mathcal G\subset \mathcal H$
are well-defined, since the vector subspace $\mathcal V_x\subset \mathcal H_x$ at $x\in U\cap V$ is isotropic (Lagrangian, coisotropic) with respect to
$d\alpha_U\vert_{\mathcal H}$,
\[
\mathcal V_x\subset \mathcal (\mathcal V_x)^{d\alpha_U\vert_{\mathcal H}}\subset \mathcal H_x \quad (\mathcal V_x = \mathcal (\mathcal V_x)^{d\alpha_U\vert_{\mathcal H}}, \quad
\mathcal H_x \supset \mathcal V_x \supset \mathcal (\mathcal V_x)^{d\alpha_U\vert_{\mathcal H}}),
\]
then and only then it is isotropic (Lagrangian, coisotropic) with respect to $d\alpha_V\vert_{\mathcal H}$, where $\alpha_U$ and $\alpha_V$ are contact forms that determine $\mathcal H$ on $U$ and $V$ respectively.

Let us consider the {contact equation} \eqref{eq2}
on a $(2n+1)$-dimensional contact manifold $(M,\mathcal H)$.
In \cite{Jo} we have proposed the following definition

\begin{dfn}\label{defJov}
We say that the contact system \eqref{eq2} is
\emph{contact non-commutative integrable} if there exists an open dense set $M_{reg}$, an Abelian Lie algebra of contact symmetries 
$\mathcal X=\Span\{X_0,\dots,X_r\}$ containing $X$, and a proper submersion
\begin{equation}\label{sub*}
\pi: M_{reg}\to W
\end{equation}
to a $p$-dimensional manifold $W$ such that
$(M_{reg},\mathcal H,\mathcal X)$ is a \emph{complete pre-isotropic contact structure}. This means that
\begin{itemize}
\item[(a)] The foliation $\mathcal F$, defined by the fibers of $\pi$, $\mathcal F_x=\ker d\pi\vert_x$
is pre-isotropic, i.e. it is transversal to $\mathcal H$ and $\mathcal G=\mathcal F\cap \mathcal H$ is an isotropic subbundle of $\mathcal H$;
\item[(b)] $\mathcal X$ has the fibers of $\pi$ as orbits.
\end{itemize}
\end{dfn}

Let us assume that the contact system \eqref{eq2} is contact-integrable in a non-commutative sense.
Then the connected components $M_{\mathbf c}^0$ of the invariant manifolds $M_{\mathbf c}=\pi^{-1}({\mathbf c})\cap M_{reg}$ have an open, $\pi$-invariant neighborhood $U$ with a $\mathcal X$--invariant local contact form $\alpha_U$, so that the pre-isotropic foliation $\mathcal F\vert_U$ is
$\alpha_U$-complete (see Theorems 4.1 and 5.1 in \cite{Jo}). Therefore, in $U$ the fibration $\pi$ is defined by integrals
\begin{equation}\label{integrali}
\pi=(f_1,f_2,\dots,f_{p})\colon U \longrightarrow \R^p
\end{equation}
satisfying the conditions of  Theorem \ref{teo1}, where either $X$ is the Reeb vector field on $(U,\alpha_U)$, or $X=X_{f_1}$. We have contact action angle coordinates $(\varphi,I,q,p)$ in which $\alpha_U$ takes the normal form
\begin{equation}\label{canonical**}
\alpha_U=I_0d\varphi_0+I_1d\varphi_1+\dots+I_rd\varphi_r+p_1dq_1+\dots+p_{s}dq_{s}.
\end{equation}

Note that Zung in Theorem 4.3 \cite{Zung} proves the existence of a contact normal form analogous to \eqref{canonical**}
assuming that the contact symmetries $\mathcal X$ are transversal to the contact distribution $\mathcal H$, instead of assuming that the invariant foliation $\mathcal F$ is pre-isotropic.

The two approaches are equivalent. In fact, we have the following lemma

\begin{lem}\label{lemT}
The conditions (a) and (b) are equivalent to the conditions
\begin{itemize}
\item[(a)'] The contact symmetries $\mathcal X$ are transversal to $\mathcal H$;

\item[(b)\,] $\mathcal X$ has the fibers of $\pi$ as orbits.
\end{itemize}
\end{lem}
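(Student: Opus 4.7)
The plan is to observe that condition (b) appears unchanged in both formulations, so the task reduces to showing that, under (b), the pre-isotropic condition (a) is equivalent to the transversality condition (a)$'$. One direction is essentially tautological: by (b), the tangent spaces of the fibers satisfy $\mathcal F_x=\Span\{X_0(x),\dots,X_r(x)\}$, so $\mathcal F$ is transversal to $\mathcal H$ at $x$ exactly when some $X_i(x)\notin\mathcal H_x$. Thus (a) implies (a)$'$ immediately, and under (a)$'$ the transversality clause of (a) is automatic. The real content is therefore to prove that, assuming (a)$'$ and (b), the distribution $\mathcal G=\mathcal F\cap\mathcal H$ is isotropic with respect to $d\alpha_U|_{\mathcal H}$ for every local contact form $\alpha_U$ defining $\mathcal H$.

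I would work on a contactable neighborhood $U$, fix such an $\alpha_U$ with Reeb vector field $Z=Z_{\alpha_U}$, and set $h_i=\alpha_U(X_i)$. By the isomorphism \eqref{isoA} each $X_i$ is the contact Hamiltonian vector field of $h_i$, and abelianness of $\mathcal X$ translates via the Lie algebra isomorphism into $[h_i,h_j]_{\alpha_U}=0$. By (a)$'$, after relabeling one may assume $h_0\neq 0$ throughout $U$, so that
\[
Y_i = X_i - \frac{h_i}{h_0}X_0, \qquad i=1,\dots,r,
\]
all lie in $\mathcal H$ (they are annihilated by $\alpha_U$) and, together with $X_0$, span $\mathcal F$; in particular they form a pointwise basis of $\mathcal G$.

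To test isotropy I evaluate $d\alpha_U(Y_i,Y_j)$. From \eqref{JBN} applied to the commuting pairs,
\[
d\alpha_U(X_i,X_j) = h_j Z(h_i) - h_i Z(h_j),
\]
and analogously for $(X_i,X_0)$ and $(X_0,X_j)$. Expanding $d\alpha_U(Y_i,Y_j)$ by bilinearity and substituting these three identities, every term cancels, giving $d\alpha_U(Y_i,Y_j)=0$. Hence $\mathcal G$ is isotropic, and (a)$'$ together with (b) implies (a).

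The only technical point I expect to verify carefully is the bookkeeping in the cancellation, and the fact that the conclusion does not depend on the choice of local contact form; the latter is automatic because, as recalled in the paragraph preceding the lemma, the isotropy condition for a subspace $\mathcal V_x\subset\mathcal H_x$ is intrinsic to the contact hyperplane and is independent of the defining contact form. No ingredient beyond the existence of local contact forms, the Jacobi bracket formula \eqref{JBN}, and the Lie algebra isomorphism \eqref{isoA} is needed.
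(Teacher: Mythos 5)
Your proof is correct and is essentially the paper's argument: both reduce to showing that (a)$'$ and (b) imply isotropy of $\mathcal G$, both identify $\mathcal G$ as the span of the vector fields $X_i-\frac{h_i}{h_0}X_0$, and both derive $d\alpha_U=0$ on $\mathcal G$ from the Jacobi bracket formula \eqref{JBN} applied to the commuting local Hamiltonians. The only cosmetic difference is that the paper first rescales the local contact form so that $X_0$ becomes its Reeb vector field (making $h_0\equiv 1$ and $Z(h_k)=0$), which turns your explicit cancellation into an immediate consequence of $[h_k,h_l]_{\alpha}=[h_k,1]_{\alpha}=0$.
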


\begin{proof} It suffices to prove that the conditions (a)', (b) imply (a), (b).

Let $x_0\in M_{reg}$ and let $\tilde\alpha_V$ be a local contact form in a neighborhood $V\subset M_{reg}$ of $x_0$ that defines $\mathcal H\vert_V$.
Since $\mathcal X$ is transversal to $\mathcal H$, we can possibly take a smaller neighborhood $V$ without loss of generality,
we assume that $\tilde\alpha_V(X_{0})\vert_V\ne 0$. Then $X_{0}$ is the Reeb vector field $Z_{\alpha_V}$ of the contact form
$\alpha_V=1/\tilde\alpha_V(X_{0})\tilde\alpha_V$.
Define functions $g_k=\alpha_V(X_k)$, $k=1,\dots,r$. From the commutativity of the vector fields it follows that
\[
[g_k,g_l]_{\alpha_V}=0, \qquad [g_k,1]_{\alpha_V}=0, \qquad k,l=1,\dots,r.
\]

Thus $Z_{\alpha_U}(g_k)=0$ and from \eqref{JBN} we get
\[
0=d\alpha_V(X_{g_k},X_{g_l})=d\alpha_V(\alpha_V^\sharp({dg_k}),\alpha_V^\sharp({dg_l})), \qquad k,l=1,\dots,r.
\]

Therefore, $\mathcal G\vert_V=\mathcal F\vert_V\cap\mathcal H\vert_V=\Span\{\alpha_V^\sharp({dg_k})\vert k=1,\dots,r\}$
is isotropic, i.e. $\mathcal F\vert_V$ is a pre-isotropic foliation on $(V,\alpha_V)$.
\end{proof}

Recall that the condition that $\mathcal G=\mathcal F\cap \mathcal H$ is an isotropic subbundle of $\mathcal H$ is equivalent to the condition that $\mathcal G$ is a foliation (see Subsection \ref{PIF}).

In the case of maximal dimension of the invariant tori ($r=n$), i.e. when
$\mathcal G=\mathcal F\cap \mathcal H$ is a
Lagrangian subbundle of $\mathcal H$, the definition \ref{defJov} coincides with the definition of contact integrability given by
Banyaga and Molino \cite{BM}. Note that in contact geometry, Lagrangian foliations are usually refer as
\emph{Legendre} foliations. In this case we say that the foliation $\mathcal F$ is \emph{pre-Legendrian}.

Examples of contact-integrable systems
with both pre-Legendrian and pre-isotropic invariant manifolds can be found in \cite{KT, Boyer, JJ1, JJ2, JJ3, SS}.

The obstructions to the existence of global contact action angle coordinates are studied in \cite{BM} for the case of commutative integrability and in \cite{Jo, SS} for the case of non-commutative integrability.

In the next two sections, we will develop a geometric framework that deals with the symmetries which need not be transverse to the contact distribution.
Simultaneously, in the cooriented case, that will include the case when the Hamiltonian is not the first integral.

\section{The contact line bundles and integrability}\label{sec3}

\subsection{Contact structure and the associated line bundle}

Following Gray \cite{Gr}, we will use a basic sheaf theory to describe the contact structure.
Let $\mathcal A$ be the sheaf of smooth real functions, and $\mathcal A^*$ the sheaf of invertible real functions on $M$.
The line bundle $L$ can be associated to a contact manifold $(M,\mathcal H)$ as follows.
Let $\{(U,\alpha_U)\}$ be a covering of $M$ with local contact forms defining $\mathcal H$.
Then for each non-empty intersection $U\cap V$ holds
\begin{equation}\label{localContact}
\alpha_U=g_{UV}\alpha_V
\end{equation}
for a smooth function $g_{UV}$, where $g_{UV}(x)\ne 0, x\in U\cap V$, i.e. $g_{UV}\in \mathcal A^*(U\cap V)$.
Then $\{g_{UV}\}$ are transition functions of the line bundle $L\in H^1(M,\mathcal A^*)$. Indeed,
It follows from the definition that whenever $U\cap V\cap W \ne\emptyset$, we have
\[
g_{UV}(x)g_{VW}(x)g_{WU}(x)=1, \quad x\in U\cap V\cap W, \quad \text{i.e,} \quad \delta\{g_{UV}\}=0.
\]

If we take other local contact forms $\alpha'_U=f_U\alpha_U$, $f\in\mathcal A^*(U)$ for a given covering $\{U\}$,
then we have on $U\cap V$
\[
\alpha'_U=g'_{UV} \alpha'_V, \quad g'_{UV}=g_{UV}\frac{f_V}{f_U}, \quad \text{i.e,} \quad \{g'_{UV}\}-\{g_{UV}\}=\delta\{f_U\}.
\]
Thus $\{g'_{UV}\}$ and $\{g_{UV}\}$ define the same line bundle over $M$ (see \cite{GH}).

A smooth section $s\in\Gamma(L)$ is a collection of smooth functions $\{s_U\}$ on the cover $\{U\}$ such that
\[
s_U=g_{UV} s_V.
\]
and the above-mentioned collection of local contact forms $\{\alpha_U\}$ can be regarded as an element $\alpha$ of $\Omega^1(L)$ (the space of line bundle-valued 1-forms \cite{GH}). We can always consider the case with local coordinates of the covering $\{(U,\alpha_U)\}$ in which the representatives $\alpha_U$ of the contact structure $\alpha\in \Omega^1(L)$ have the canonical form described in Example \ref{canonical} (see \cite{Gr}).

Now there is a $\mathbb R$-linear isomorphism $\Phi$ between contact vector fields and the space of smooth sections
 of $L$:
\begin{equation}\label{iso2}
\Phi\colon \mathfrak X(M,\mathcal H)\to \Gamma(L), \qquad \Phi(X)_U=\Phi_{\alpha_U}(X\vert_U)=\alpha_U(X\vert_U).
\end{equation}

The inverse image of a section $s\in\Gamma(L)$ is called the \emph{contact Hamiltonian vector field} $X_s$ with the Hamiltonian $s$.
As above, the mapping \eqref{iso2} is a Lie algebra isomorphism, where on
$\mathfrak X(M,\mathcal H)$ is the usual bracket and the {\it Jacobi bracket} on $\Gamma(L)$ is defined by
\begin{equation*}
[s_1,s_2]=\Phi([X_{s_1},X_{s_2}]) \quad \Longleftrightarrow \quad X_{[s_1,s_2]}=[X_{s_1},X_{s_2}].
\end{equation*}

Now the Hamiltonian vector field and the Jacobi bracket on $\Gamma(L)$ are given locally by:
\begin{align*}
& X_s\vert_U =\Phi^{-1}_{\alpha_U}(s_U)=s_U Z_{\alpha_U}+\alpha_U^\sharp({ds_U}-ds_U(Z_{\alpha_U})\alpha_U) \quad (\mathcal L_{X_s\vert_U}\alpha_U=Z_{\alpha_U}(s_U)\alpha_U), \\
& [s_1,s_2]_U = [s_{1U},s_{2U}]_{\alpha_U}= X_{s_{1U}}(s_{2U})-s_{2U}Z_{\alpha_U} (s_{1U}),
\end{align*}
while for the non-empty intersection points $U\cap V$ applies
\begin{align*}
& X_s\vert_U=\Phi^{-1}_{\alpha_U}(s_U)=\Phi^{-1}_{g_{UV}\alpha_V}(g_{UV}s_V)=\Phi^{-1}_{\alpha_V}(s_V)=X_s\vert_V,\\
& [s_{1U},s_{2U}]_{\alpha_U}=[g_{UV}s_{1V},g_{UV}s_{2v}]_{\alpha_U}=g_{UV}[s_{1V},s_{2V}]_{\alpha_V}.
\end{align*}

\subsection{From contact symmetries to integrals}\label{CS->INT}

Consider the contact Hamiltonain equation
\begin{equation}
\dot x=X_h, \qquad h\in\Gamma(L).
\label{eq2h}
\end{equation}
A section $s\in\Gamma(L)$ is called a \emph{symmetry} of the Hamiltonian $h$ if $[h,s]=0$.

If $X_1$ and $X_2$ are contact symmetries, then it is $\lambda_1 X_1+\lambda_2 X_2$, $\lambda_1,\lambda_2\in\R$. Let us consider the vector space of contact symmetries
$\mathcal Y=\Span \{X_0,\dots,X_p\}\subset \mathfrak X(M,\mathcal H)$,
and the associated vector space
\begin{equation}\label{simetrije}
\mathbf Y=\Span\{s_0=\Phi(X_0),\dots,s_p=\Phi(X_p)\}=\Phi(\mathcal Y)\subset\Gamma(L)
\end{equation}
of the symmetries of the Hamiltonian $h$.
In the following, we will describe a natural construction of the corresponding first integrals.

Let $s\in\Gamma(L)$. Consider the open submanifold
\[
M_s=\{x\in M\, \vert\, s(x)\ne 0\}.
\]
On $M_s$ we have globally well-defined contact 1-form $\alpha_s=\alpha/s$:
\[
\alpha_{sU}(x)=\frac{\alpha_U(x)}{s_U(x)}=\frac{g_{UV}(x)\alpha_V(x)}{g_{UV}(x)s_V(x)}=\alpha_{sV}(x), \qquad x\in U\cap V\cap M_s.
\]

Let us also consider the mapping
\[
\phi_s\colon \Gamma(L)\to \mathcal A(M_s), \qquad \phi_s(l)=l/s.
\]

It is easy to prove that this is a homomorphism of the Lie algebras $(\Gamma(L),[\cdot,\cdot])$ and $(\mathcal A(M_s),[\cdot,\cdot]_{\alpha_s})$, which maps the contact Hamiltonian vector field
$X_s$ to the Reeb vector field of $(M_s,\alpha_s)$:
\begin{align*}
& \phi_s([l_1,l_2])=[\phi_s(l_1),\phi_s(l_2)]_{\alpha_s},\\
& X_s\vert_{M_s}=\Phi^{-1}(s)\vert_{M_s}=Z_{\alpha_s}=\Phi^{-1}_{\alpha_s}(1),\\
& X_l\vert_{M_s}=\Phi^{-1}(l)\vert_{M_s}=\Phi^{-1}_{\alpha_s}(\phi_s(l)).
\end{align*}

\begin{lem}\label{pomocnaLema}
Let us assume that $s\in\Gamma(L)$ is a symmetry of the Hamiltonian $h$: $[s,h]=0$. Then $M_s$ and its complement, the zero divisor
\[
(s)=M\setminus M_s=\{x\in M\, \vert\, s(x)=0\},
\]
are invariant with respect to the contact flow \eqref{eq2h}.
\end{lem}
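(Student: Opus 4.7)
The plan is to reduce the statement to a local linear ODE along integral curves of $X_h$. The key observation is that the hypothesis $[s,h]=0$ is a statement about sections of $L$, but in any trivializing chart it becomes a concrete identity between smooth functions.

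Concretely, fix an open cover $\{(U,\alpha_U)\}$ as in Section~\ref{sec3}, so $s$ is represented on $U$ by the smooth function $s_U$ and $h$ by $h_U$. By the local expression of the Jacobi bracket recalled just before this lemma, namely $[s_1,s_2]_U = [s_{1U},s_{2U}]_{\alpha_U} = X_{s_{1U}}(s_{2U}) - s_{2U} Z_{\alpha_U}(s_{1U})$, the assumption $[s,h]=0$ specializes on $U$ to
\[
0 = [h,s]_{\alpha_U} = X_{h}(s_U) - s_U\, Z_{\alpha_U}(h_U),
\]
that is, $X_h(s_U) = c_U \, s_U$ with $c_U := Z_{\alpha_U}(h_U) \in \mathcal A(U)$. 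This is the crucial identity.

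Next, I would pick $x\in M$, let $\phi^t$ denote the flow of $X_h$, and choose a chart $U$ from the cover containing a segment of the trajectory through $x$. Setting $\sigma(t) := s_U(\phi^t(x))$, the identity above gives the linear scalar ODE
\[
\dot\sigma(t) = c_U(\phi^t(x))\,\sigma(t),
\]
whose solution is $\sigma(t) = \sigma(0)\,\exp\!\big(\int_0^t c_U(\phi^\tau(x))\,d\tau\big)$. Hence $\sigma(0)=0$ forces $\sigma(t)\equiv 0$ on the subinterval where the trajectory lies in $U$, and $\sigma(0)\neq 0$ forces $\sigma(t)\neq 0$ there. Covering any compact piece of the trajectory with finitely many such charts and concatenating these conclusions yields the same dichotomy along the whole flow line; the transition rule $s_U=g_{UV}s_V$ with $g_{UV}\in\mathcal A^*(U\cap V)$ guarantees that the vanishing of $s$, and hence the definitions of $M_s$ and $(s)$, are independent of the chosen trivialization, so the chart-by-chart conclusion glues consistently to the global statement that both $M_s$ and $(s)$ are $\phi^t$-invariant.

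I do not expect a serious obstacle here: the only mildly delicate point is that the argument must be carried out chart-wise because $s$ is a section of a (possibly nontrivial) line bundle rather than a global function, and the ODE involves the auxiliary function $Z_{\alpha_U}(h_U)$ which depends on the chosen representative $\alpha_U$. This is harmless because the vanishing locus of $s$ is intrinsic, and the linear structure of the ODE ensures the zero set is preserved on each chart; patching is then automatic.
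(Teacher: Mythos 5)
Your proposal is correct and follows essentially the same route as the paper: both reduce to the local identity $[h,s]_{\alpha_U}=X_h(s_U)-s_U Z_{\alpha_U}(h_U)=0$, i.e.\ $X_h(s_U)=Z_{\alpha_U}(h_U)\,s_U$. In fact your write-up is slightly more complete than the paper's, which only observes that $X_h(s_U)$ vanishes at points of $(s)$ and concludes invariance from there; your integration of the linear ODE $\dot\sigma=c_U\sigma$ along trajectories is the rigorous way to finish and also explicitly yields the invariance of the complement $M_s$.
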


\begin{proof}
Consider a local chart $(U,\alpha_U)$, with $s_U$ and $h_U$ being the local representatives of $s$ and $h$.
For $x\in U\cap (s)$ we then have $s_U(x)=0$ and
\[
0=[h_{U},s_{U}]_{\alpha_U}(x) = X_{h}(s_{U})\vert_x-s_{U}Z_{\alpha_U}(h_U)\vert_x=X_{h}(s_U)\vert_x.
\]
Therefore, (s) is invariant under the flow of $X_{h}$.
\end{proof}

\begin{thm}\label{pomocna}
Consider symmetries \eqref{simetrije} of the contact Hamiltonian equations \eqref{eq2h}.

(i) The zero locus of the symmetries
\[
M_{\mathbf 0}=\{x\in M\,\vert\, s(x)=0, \, s\in \mathbf Y\}
\]
is an invariant variety of the system \eqref{eq2h}.

(ii) Let $s\in \mathbf Y$. The functions
\[
\mathbf Y_s=\{\phi_s(l)=l/s\,\vert\, l\in\mathbf Y\}\subset \mathcal A(M_s)
\]
are the first integrals  restricted to $M_s$.

(iii) The symmetries $\mathbf Y$ determine well-defined mappings
\begin{equation}\label{PI}
\pi\colon M\setminus M_{\mathbf 0} \longrightarrow \mathbb{RP}^p, \qquad \pi(x)\vert_U=[s_{0U}(x),\dots,s_{pU}(x)],
\end{equation}
where $[\cdot,\cdots,\cdot]$ are homogeneous coordinates on $\mathbb{RP}^p$.
Let $M_{reg}$ be an open set of $M\setminus M_\mathbf 0$, where the rank of the differential $d\pi$ is $p$.
The sets $M_{reg}$,
\[
M_{reg,i}=M_{reg}\cap M_{s_i}, \qquad i=0,\dots,p,
\]
and the $(2n+1-p)$-dimensional fibers $\pi^{-1}(c)\cap M_{reg}$ ($c\in \pi(M_{reg})$) are invariant under the contact flow \eqref{eq2h}.
\end{thm}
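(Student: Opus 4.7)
The plan is to treat the three items in order, with (i) an immediate corollary of Lemma \ref{pomocnaLema}, (ii) a one-line computation once the Lie algebra homomorphism $\phi_s$ is invoked, and (iii) a matter of checking well-definedness of $\pi$ and then reducing fiberwise invariance to (ii) via the standard affine cover of $\mathbb{RP}^p$.

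For (i), I would observe that since $\mathbf Y$ is spanned by $s_0,\dots,s_p$, the zero locus is the finite intersection $M_{\mathbf 0}=\bigcap_{i=0}^{p}(s_i)$. Each divisor $(s_i)$ is invariant by Lemma \ref{pomocnaLema} (because every $s_i\in\mathbf Y$ is a symmetry of $h$), and intersections of flow-invariant sets are flow-invariant. For (ii), fix $s\in\mathbf Y$ and let $l\in\mathbf Y$. Applying the Lie algebra homomorphism $\phi_s$ to the identity $[h,l]=0$ in $\Gamma(L)$ gives $[\phi_s(h),\phi_s(l)]_{\alpha_s}=0$ in $\mathcal A(M_s)$. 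Using \eqref{JBN} for $\alpha_s$ together with $X_h\vert_{M_s}=X^{\alpha_s}_{\phi_s(h)}$ and $Z_{\alpha_s}=X_s\vert_{M_s}$, this expands to
\[
0 = X_h(\phi_s(l)) - \phi_s(l)\,Z_{\alpha_s}(\phi_s(h)).
\]
Applying $\phi_s$ to $[h,s]=0$ and using $\phi_s(s)=1$ gives $[\phi_s(h),1]_{\alpha_s}=0$, which by \eqref{JBN} forces $Z_{\alpha_s}(\phi_s(h))=0$. Substituting back yields $X_h(\phi_s(l))=0$, so every $\phi_s(l)$ is a first integral on $M_s$.

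For (iii), well-definedness of the map $\pi$ in \eqref{PI} is a direct check: on $U\cap V$ the transition relation $s_{iU}=g_{UV}s_{iV}$ (with $g_{UV}$ nowhere vanishing) implies that the tuples $(s_{0U},\dots,s_{pU})$ and $(s_{0V},\dots,s_{pV})$ coincide in $\mathbb{RP}^p$ at each point of $U\cap V\setminus M_{\mathbf 0}$. To see invariance of the fibers under $X_h$, cover $\mathbb{RP}^p$ by the standard affine charts $\{[\xi_0\!:\!\cdots\!:\!\xi_p]\,|\,\xi_i\ne0\}$. On $M_{s_i}$ the map $\pi$ reads, in this chart, as the tuple $(\phi_{s_i}(s_0),\dots,\widehat{\phi_{s_i}(s_i)},\dots,\phi_{s_i}(s_p))$, whose components are first integrals of $X_h$ by (ii). Hence $\pi$ is constant along every trajectory that stays inside $M\setminus M_{\mathbf 0}$, so the fibers $\pi^{-1}(c)$ are invariant. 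Finally, $M_{s_i}$ is invariant by Lemma \ref{pomocnaLema}; constancy of $\pi$ on trajectories implies $d\pi\vert_{\phi^t(x)}\circ d\phi^t\vert_x=d\pi\vert_x$, and since $d\phi^t$ is an isomorphism the rank of $d\pi$ is preserved, so $M_{reg}$ and $M_{reg,i}=M_{reg}\cap M_{s_i}$ are invariant. Dimension counting on the regular fibers gives $2n+1-p$.

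The only genuinely non-routine step is the key identity $Z_{\alpha_s}(\phi_s(h))=0$ in (ii), which requires using that $s$ itself (not merely $l$) is a symmetry of $h$; the rest is bookkeeping with transition functions and the Lie algebra homomorphism $\phi_s$ already described in the paper.
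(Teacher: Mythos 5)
Your proposal is correct and follows essentially the same route as the paper: (i) from Lemma \ref{pomocnaLema}, (ii) by pushing $[h,l]=0$ and $[h,s]=0$ through the homomorphism $\phi_s$ and using the bracket formula to extract $Z_{\alpha_s}(\phi_s(h))=0$, and (iii) via the affine charts $\vartheta_i$ reducing everything to the integrals $s_k/s_i$. The only (harmless) deviations are that you treat the cases $s=h$ and $s\ne h$ uniformly where the paper splits them, and you prove invariance of $M_{reg}$ by the chain-rule rank argument $d\pi\vert_{\phi^t(x)}\circ d\phi^t\vert_x=d\pi\vert_x$ instead of the paper's Lie-derivative computation for the $p$-form $\omega_{s_i}$; both are valid.
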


Let $\mathcal E\subset TM$ be a distribution generated by contact Hamiltonian vector fields of symmetries $\mathbf Y$:
\[
\mathcal E_x=\Span\{X_{s_0}(x),\dots,X_{s_p}(x)\}.
\]
Note that the zero set $M_\mathbf 0$ can also be defined as a set of points for which the distribution $\mathcal E$ is a subset of the contact distribution:
\begin{equation}\label{zeroL}
M_\mathbf 0=\{x\in M\,\vert\, \mathcal E_x\subset \mathcal H_x\},
\end{equation}
i.e. at the points outside $M_\mathbf 0$, the space of contact symmetries $\mathcal Y$ is transverse to the contact distribution $\mathcal H$.

The construction of the mapping \eqref{PI} by using the sections is standard in the theory of line bundles (see e.g. \cite{GH}).
We say that $\pi$ is the \emph{momentum mapping} associated with the symmetries $\mathbf Y$.

\begin{proof}[Proof of  Theorem \ref{pomocna}]
Item (i) follows from Lemma \ref{pomocnaLema}.

\medskip

(ii) If $s=h$ then $Z_{\alpha_h}=X_h\vert_{M_h}$, and $[h,l]=0$ implies $[1,\phi_h(l)]_{\alpha_h}=0$, i.e
\[
Z_{\alpha_h}(\phi_h(l))=0.
\]
Otherwise, for $s \ne h$, the involution $[h,s]=0$ implies $[\phi_s(h),1]_{\alpha_s}=0$ and $Z_{\alpha_s}(\phi_s(h))=0$, i.e,
\[
0=[\phi_s(h),\phi_s(l)]_s=X_{h}(\phi_s(l))-\phi_s(l)Z_s (\phi_s(h))=X_{h}(\phi_s(l)).
\]

\medskip

(iii) Let $y=(y_0,\dots,y_p)$ be the coordinates on $\R^{p+1}$ and $[y]=[y_0,\dots,y_p]$ the corresponding homogeneous coordinates on $\mathbb{RP}^p=(\R^{p+1}\setminus\{0\})/\R$.
 Obviously, the mapping $\pi$ is well-defined:
\[
[s_{0U}(x),\dots,s_{pU}(x)]=[g_{UV}(x)s_{0V}(x),\dots,g_{UV}(x)s_{pV}(x)]=[s_{0V}(x),\dots,s_{pV}(x)],
\]
$x\in U\cap V\setminus M_\mathbf 0$. For $x\in M_{s_i}$ we can consider a local coordinate chart $\vartheta_i$ of $[y]=\pi(x)\in\mathbb{RP}^p$,
\[
\vartheta_i\colon U_i\longrightarrow \mathbb R^p, \quad \vartheta_i([y_0,\dots,y_p])=\big(\frac{y_0}{y_i},\dots,\frac{y_{i-1}}{y_i},\frac{y_{i+1}}{y_i}, \dots, \frac{y_p}{y_i}\big),
\]
where $\{U_i\}_{i=0}^p$ is the standard covering of the projective space:
\[
U_i=\mathbb{RP}^p \setminus \{y_i=0\}, \qquad i=0,\dots,p.
\]

Then the rank of $d\pi(x)$, $[y]=\pi(x)\in U_i$ is equal to the rank $d\pi_i(x)$, where
\begin{equation}\label{PIi}
\pi_i=\vartheta_i\circ \pi=(\phi_{s_i}(s_0),\dots,\phi_{s_i}(s_{i-1}),\phi_{s_i}(s_{i+1}),\dots,\phi_{s_i}(s_p))\colon M_{s_i}\longrightarrow \R^p.
\end{equation}

The functions $\phi_{s_i}(s_k)=s_k/s_i$ are integrals of the flow (see item (ii)). It follows that
\[
\mathcal L_{X_h}(d(s_k/s_i))=i_{X_h} d^2(s_k/s_i)+d(X_h(s_k/s_i))=0,
\]
and the $p$-form
\begin{equation}\label{omegaI}
\omega_{s_i}=d(s_0/s_i)\wedge\dots\wedge d(s_{i-1}/s_{i})\wedge d(s_{i+1}/s_{i})\wedge\dots\wedge d(s_{p}/s_{i})
\end{equation}
is also invariant along the flow:
\begin{align*}
\mathcal L_{X_h}(\omega_{s_i})=& \big(\mathcal L_{X_h}d(s_0/s_i)\big)\wedge\dots\wedge d(s_{i-1}/s_{i})\wedge d(s_{i+1}/s_{i})\wedge\dots\wedge d(s_{p}/s_{i})+\dots+\\
& + d(s_0/s_i)\wedge\dots\wedge d(s_{i-1}/s_{i})\wedge d(s_{i+1}/s_{i})\wedge\dots\wedge \big(\mathcal L_{X_h}d(s_{p}/s_{i})\big)=0.
\end{align*}

Since $M_{reg,i}=M_{reg}\cap M_{s_i}$ can be defined as the subset of $M_{s_i}$ in which $\omega_{s_i}\ne 0$,
it is invariant along the flow.
In particular, we have obtained that the $(2n+1-p)$-dimensional submanifolds $\pi^{-1}(c)\cap M_{reg}$ ($c\in \pi(M_{reg})$) and $M_{reg}$ are also invariant.
\end{proof}

\subsection{The dual of the tautological line bundle}

Note that we can use $(M_{s_i},\alpha_{s_i})$ as a covering of $M\setminus M_\mathbf 0$ with local contact forms defining the contact distribution $\mathcal H$.
Since on $M_{s_i}\cap M_{s_j}$ then applies
\[
\alpha_{s_i}=\frac{1}{s_i}\alpha=\frac{s_j}{s_i}\alpha_{s_j},
\]
the transition functions of the line bundle $L$ are given by:
\[
g_{M_{s_i},M_{s_j}}=\frac{s_j}{s_i}.
\]

Therefore, the line bundle $L$ restricted to $M\setminus M_\mathbf 0$ is equal to $\pi^*\mathcal O(1)$ - the pull-back of the dual of the tautological line bundle $\mathcal O(1)$ over $\mathbb{RP}^p$. Recall that
$\mathcal O(1)$ is defined by the transition functions
\[
g_{U_i,U_j}([y]):=\frac{y_j}{y_i}, \qquad [y]\in U_i\cap U_j,
\]
with respect to the standard covering $\{U_i\}_{i=0}^p$ of $\mathbb{RP}^p$.
A section $l\in\Gamma(\mathcal O(1))$ is locally given by smooth functions $l_{U_i}\in\mathcal A(U_i)$ that satisfy
\[
l_{U_i}(\vartheta_i[y])= g_{U_i,U_j}([y])\, l_{U_j}(\vartheta_j[y]),
\]
i.e,
\[
y_i \, l_{U_i}\big(\frac{y_0}{y_i},\dots,\frac{y_{i-1}}{y_i},\frac{y_{i+1}}{y_i}, \dots, \frac{y_p}{y_i}\big)=y_j\, l_{U_j}\big(\frac{y_0}{y_j},\dots,\frac{y_{j-1}}{y_j},\frac{y_{j+1}}{y_j}, \dots, \frac{y_p}{y_j}\big).
\]
Thus $l$ can be considered as a smooth homogeneous degree 1 function in the variables $y=(y_0,\dots,y_p)$, which is defined for $y\ne 0$.

\subsection{Contact symmetries and non-commutative integrability}

We use the notation of  Theorem \ref{pomocna}.

\begin{thm}\label{LIN}
Let us assume that the contact Hamiltonian system \eqref{eq2h} has symmetries $s_0=h,\dots,s_p$
such that $s_0,\dots,s_r$ Jacobi-commute with all symmetries
\[
[s_i,s_j]=0, \quad i=0,\dots,r, \quad j=0,\dots,p, \quad p+r=2n,
\]
 and assume that the contact vector fields $X_0=X_{s_0},\dots,X_r=X_{s_r}$ are complete on $M$.
 Let
 \[
\pi\colon M\setminus M_\mathbf 0 \longrightarrow \mathbb{RP}^p
\]
be the associated momentum map and let $M_{reg}$ be an open subset for which the rank of the differential $d\pi$ is equal to $p$. Then

(i) A commutative Lie algebra of the contact symmetries
\begin{equation}\label{CS}
\mathcal X=\Span\{X_0,\dots,X_r\}
\end{equation}
is tangent to the fibers of the associated momentum map $\pi$ and
\begin{equation}\label{XR}
\dim\mathcal F_x=r+1, \qquad x\in M_{reg},\qquad \mathcal F_x=\Span\{X_0(x),\dots,X_r(x)\}.
\end{equation}

A connected component $M_{\mathbf c}^0$ of the invariant level set
\[
M_{\mathbf c}=\pi^{-1}(\mathbf c)\cap M_{reg}
\]
is diffeomorphic to $\mathbb T^{l} \times \R^{r+1-l}$, for some $l$, $0\le l \le r+1$.
There exist coordinates $\varphi_0,\dots,\varphi_{l-1},x_1,\dots,x_{r+1-l}$
of $\mathbb T^l \times \R^{r+1-l}$, which linearize the equation \eqref{eq2}:
\begin{eqnarray*}
&&\dot\varphi_i=\omega_i=const, \qquad i=0,\dots,l-1,\\
&&\dot x_j=a_j=const, \qquad j=1,\dots,r+1-l.
\end{eqnarray*}

(ii) The contact symmetries \eqref{CS} are also tangent to the zero locus $M_{\mathbf 0}$. Let $M_{\mathbf 0,reg}$ be an open subset of $M_{\mathbf 0}$ such that each point has a neighborhood $U$
with local sections $s_{0U},\dots,s_{pU}$ that are independent in a chart $(U,\alpha_U)$:
\[
M_{\mathbf 0,reg}\cap U=\{x\in U\,\vert\, s_{0U}(x)=0,\dots,s_{pU}(x)=0, \,ds_{0U} \wedge \dots \wedge ds_{pU}\vert_x \ne 0 \}.
\]
Then
\begin{equation}\label{XR*}
\dim\mathcal F_x=r, \qquad x\in M_{\mathbf 0,reg}
\end{equation}
and a connected component $M_{\mathbf 0}^0$ of $M_{\mathbf 0,reg}$
is diffeomorphic to $\mathbb T^{l} \times \R^{r-l}$ with linearized dynamics ($0\le l \le r$).
\end{thm}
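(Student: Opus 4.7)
The plan is to treat parts (i) and (ii) in parallel: use Theorem \ref{pomocna} and Lemma \ref{pomocnaLema} to establish tangency of the symmetries to the relevant invariant sets, pin down the exact dimension of $\mathcal F_x$ by a local linear-algebra computation in a suitable chart, and then invoke the standard $\R^{r+1}$-action argument of Liouville--Arnold--Nekhoroshev to obtain the stated diffeomorphism type and linearized dynamics.

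For (i), the functions $\phi_{s_j}(s_k)=s_k/s_j$ on $M_{s_j}$ cut out the fibers of $\pi$ via the chart $\pi_j$ of \eqref{PIi}, and by Theorem \ref{pomocna}(ii) (applied with $s_i$ in the role of the Hamiltonian, using $[s_i,s_j]=[s_i,s_k]=0$) they are first integrals of every $X_{s_i}$, $i=0,\dots,r$; hence each $X_{s_i}$ is tangent to the fibers of $\pi$. To get $\dim\mathcal F_x=r+1$ on $M_{reg}$ I would work in the globally defined contact form $\alpha_{s_j}$ on $M_{s_j}$: then $X_{s_j}=Z_{\alpha_{s_j}}$ is the Reeb field, and for $i\ne j$ one has $X_{s_i}=(s_i/s_j)Z_{\alpha_{s_j}}+\alpha_{s_j}^\sharp(\widehat{d(s_i/s_j)})$. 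The identity $Z_{\alpha_{s_j}}(s_k/s_j)=0$ (from $[s_j,s_k]=0$) places every $d(s_k/s_j)$ in $\mathcal Z^0_{\alpha_{s_j}}$, so a relation $\sum\lambda_iX_{s_i}(x)=0$ splits cleanly into a Reeb component and an $\alpha_{s_j}^\sharp$-image of a horizontal component; the rank condition $\rank\,d\pi=p$, equivalent to independence of $\{d(s_k/s_j)(x):k\ne j\}$, kills all $\lambda_i$ with $i\ne j$, after which the Reeb part kills $\lambda_j$. The diffeomorphism type and linearization on a connected component $M_{\mathbf c}^0$ then follow from the classical argument: $X_0,\dots,X_r$ are complete commuting vector fields spanning the tangent bundle of the $(r+1)$-dimensional invariant manifold $M_{\mathbf c}^0$, so they define a locally free transitive $\R^{r+1}$-action on $M_{\mathbf c}^0$; the isotropy is a discrete subgroup $\Gamma\subset\R^{r+1}$, whence $M_{\mathbf c}^0\simeq\R^{r+1}/\Gamma\simeq\T^l\times\R^{r+1-l}$, and an adapted basis of $\R^{r+1}$ descends to coordinates $(\varphi_0,\dots,\varphi_{l-1},x_1,\dots,x_{r+1-l})$ in which each $X_i$, in particular $X_h=X_{s_0}$, acts as a constant translation.

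For (ii), tangency of each $X_{s_i}$ to $M_{\mathbf 0}=\bigcap_j(s_j)$ follows from Lemma \ref{pomocnaLema} applied to every zero divisor $(s_j)$, using $[s_i,s_j]=0$, $i\in\{0,\dots,r\}$, $j\in\{0,\dots,p\}$. The main obstacle is to pin down $\dim\mathcal F_x=r$ on $M_{\mathbf 0,reg}$: the upper bound $\dim\mathcal F_x\le r$ is immediate from $\mathcal F_x\subset T_xM_{\mathbf 0,reg}$, so at least one nontrivial relation $\sum\lambda_iX_{s_i}(x)=0$ exists and one must show the relation space is exactly one-dimensional. Passing to a chart $(U,\alpha_U)$ and setting $\omega_i=ds_{iU}(x)$, $c_i=\omega_i(Z_{\alpha_U}(x))$, $\sigma=\alpha_U(x)$, one has $X_{s_i}(x)=\alpha_U^\sharp(\omega_i-c_i\sigma)\in\mathcal H_x$ (since $s_{iU}(x)=0$), so the relation reads
\[
\sum_{i=0}^r\lambda_i\omega_i=\Bigl(\sum_{i=0}^r\lambda_ic_i\Bigr)\sigma.
\]
Linear independence of $\omega_0,\dots,\omega_p$ (the defining condition of $M_{\mathbf 0,reg}$) forces the right-hand side to be nonzero (otherwise $\lambda=0$), so $\sigma$ admits a unique expansion $\sigma=\sum_{i=0}^r\beta_i\omega_i$; back-substitution then yields $\lambda_i=t\beta_i$ with $t=\sum_j\lambda_jc_j\ne0$, showing that every relation is a scalar multiple of $\beta$. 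Thus $\dim\mathcal F_x=r$.

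The same $\R^{r+1}$-action argument then applies on $M_{\mathbf 0,reg}$, now with closed $1$-dimensional isotropy; every such closed subgroup of $\R^{r+1}$ is of the form $\R\times\mathbb Z^l$, so each connected component of $M_{\mathbf 0,reg}$ is diffeomorphic to $\R^{r+1}/(\R\times\mathbb Z^l)\simeq\T^l\times\R^{r-l}$, and an adapted basis of $\R^{r+1}$ produces linearizing coordinates in which $X_h$ is a constant translation.
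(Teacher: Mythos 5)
Your proposal is correct and follows essentially the same route as the paper: tangency via Theorem \ref{pomocna}(ii) and Lemma \ref{pomocnaLema}, the pointwise rank counts via the splitting $X_s=sZ_\alpha+\alpha^\sharp(ds-ds(Z_\alpha)\alpha)$ (which the paper packages separately as Lemma \ref{LEMA}), and the classical commuting-complete-vector-fields argument giving $\mathbb T^{l}\times\R^{r+1-l}$ with linear dynamics. The only spot to tighten is in part (i): at points of $\Sigma$, where $s_0(x)=\dots=s_r(x)=0$, the reference index must be some $j>r$ with $s_j(x)\ne0$, so there is no ``$\lambda_j$'' for the Reeb part to kill --- but there all coefficients $(s_i/s_j)(x)$ vanish and independence of the differentials $d(s_i/s_j)$ alone annihilates every $\lambda_i$, exactly as in the paper's case split $0\le i\le r$ versus $r+1\le i\le p$.
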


If invariant manifolds are compact, then obviously $X_0,\dots,X_r$ are complete.
Also, for a Hamiltonian $h$ one can take any linear combination of commuting symmetries. In Examples \ref{Primer} and \ref{Primer2}
we provide contact integrable systems on
the projective space bundle of the cotangent bundle of the torus $\mathbb T^{n+1}$
 with compact and non-compact invariant manifolds.

We note that  Theorem \ref{LIN} allows us to prove the integrability of contact systems related to contact reductions (see \cite{ZZ}) and contact dual pairs (see \cite{BSTV}) (for Hamiltonian systems see \cite{BJ, Zu, Jo2008}). We will deal with these problems in a separate paper.

\begin{proof}[Proof of Theorem \ref{LIN}]
(i) By taking the sections $s_0,\dots, s_r$ for the Hamiltonians,  from item (ii) of Theorem \ref{pomocna} we obtain that the domains $M_{reg,i}$ ($i=0,\dots,p$)
and the fibers of the mapping $\pi$ are invariant under the action of a commutative Lie algebra of contact symmetries \eqref{CS}.

Furthermore, the vector fields $X_k$, $k=0,\dots,r$ are independent and the fibers of $\pi$
are spanned by $\mathcal X$. Let us assume $r+1\le i \le p$. Since $\omega_{s_i}\ne 0$ on $M_{reg,i}$ (see \eqref{omegaI} and the proof of  Theorem \ref{pomocna}), we have
\begin{equation*}
d(s_0/s_i)\wedge\dots\wedge d(s_{r}/s_{i})\ne 0.
\end{equation*}
Furthermore, we obtain from $[s_k,s_i]=0$ that $s_k/s_i$ are integrals of the Reeb flow on $(M_{reg,i},\alpha_{s_i})$:
\[
Z_{\alpha_{s_i}}(s_k/s_i)=d(s_k/s_i)(Z_{\alpha_{s_i}})=0, \qquad k=0,\dots, r.
\]
Thus, the vector fields $X_0=\Phi^{-1}_{\alpha_i}({s_0/s_i}),\dots,X_r=\Phi^{-1}_{\alpha_i}(X_{s_r/s_i})$ are independent on $M_{reg,i}$ (see Lemma \ref{LEMA} below).

Similarly, for the case $0\le i \le r$, it holds that
\begin{align*}
&\omega_{s_i}=d(s_0/s_i)\wedge\dots\wedge d(s_{i-1}/s_{i})\wedge d(s_{i+1}/s_{i})\wedge\dots\wedge d(s_{r}/s_{i})\ne 0,\\
& Z_{\alpha_{s_i}}(s_k/s_i)=d(s_k/s_i)(Z_{\alpha_{s_i}})=0, \qquad k=0,\dots,i-1,i+1,\dots,r,
\end{align*}
and the vector fields
\begin{align*}
& X_0=\Phi^{-1}_{\alpha_i}({s_0/s_i}),\dots,X_{i-1}=\Phi^{-1}_{\alpha_i}({s_{i-1}/s_i}), X_i=Z_{\alpha_i},\\
& X_{i+1}=\Phi^{-1}_{\alpha_i}({s_{i+1}/s_i}), \dots, X_r=\Phi^{-1}_{\alpha_i}({s_r/s_i})
\end{align*}
are independent on $M_{reg,i}$ (Lemma \ref{LEMA}).

The proof of item (i) now follows from the construction of angle coordinates and the linearization in the Arnold--Liouville theorem \cite{Ar}.

\medskip

(ii) Let $M_{\mathbf 0}^0$ be a connected component of $M_{\mathbf 0,reg}$.
Consider a neighborhood $U$ of $x\in M^0_{\mathbf 0}$
with the local sections $s_{0U},\dots,s_{pU}$.
As in item (i) of  Theorem \ref{pomocna}, we have
\[
0=[s_{kU},s_{iU}]_{\alpha_U}(x) = X_{s_k}(s_{iU})\vert_x-s_{iU}Z_{\alpha_U}(s_{kU})\vert_x=X_{s_k}(s_{iU})\vert_x, \qquad x\in M_{\mathbf 0}^0\cap U,
\]
$k=0,\dots,r$, $i=0,\dots,p$. Thus $\mathcal F_x\subset T_x M_{\mathbf 0}^0$.
On the other hand, we have $ds_{0U}\wedge \dots \wedge ds_{pU}\ne 0$ on $U$, and by Lemma \ref{LEMA}, we get
\[
\dim\mathcal F_x=r \qquad \text{or} \qquad \dim\mathcal F_x=r+1.
\]
Since $\dim M_0^0=r$, we get $\dim\mathcal F_x=r$ and $\mathcal F_x=T_x M_{\mathbf 0}^0$. Thus, $M_\mathbf 0^0$ is diffeomorphic to $\mathbb T^{l} \times \R^{r-l}$ with linearized dynamics.
\end{proof}

\subsection{From integrals to contact symmetries}\label{INT->CS}

In the Subsection \ref{CS->INT} we described how the symmetries of the contact system \eqref{eq2h}
lead to its first integrals. Conversely, we have the following statement

\begin{prop}\label{Stav}
Let $s\in\Gamma(L)$ be a symmetry and let $f\in\mathcal A(M)$ be an integral of the system \eqref{eq2h}.
Then the sections $fh$ and $fs$ of the contact bundle $L$ are also symmetries of the system.
\end{prop}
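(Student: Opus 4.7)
The plan is to reduce the claim to a local computation using the formula for the Jacobi bracket given in \eqref{JBN}, and then exploit a Leibniz-type identity for multiplication of a section by a smooth function. Since both statements have the same structure, I will sketch the proof of $[h, fs]=0$; the case $[h,fh]=0$ follows by setting $s=h$.

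Fix a local contact form $\alpha_U$ on $U\subset M$, with local representatives $h_U, s_U$ of the sections $h,s$. The product section $fs$ has local representative $(fs)_U = f\cdot s_U$ (here $f\in\mathcal A(M)$ is a genuine function, so it is not rescaled by the transition functions $g_{UV}$). Applying the local formula \eqref{JBN} for the Jacobi bracket, I would compute
\begin{align*}
[h, fs]_U = [h_U, fs_U]_{\alpha_U}
&= X_{h_U}(fs_U) - fs_U\, Z_{\alpha_U}(h_U) \\
&= f\bigl(X_{h_U}(s_U) - s_U\,Z_{\alpha_U}(h_U)\bigr) + s_U\,X_{h_U}(f) \\
&= f\,[h_U, s_U]_{\alpha_U} + s_U\,X_h(f).
\end{align*}
This is the desired Leibniz-type rule. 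The first term vanishes because $s$ is a symmetry: $[h,s]=0$, whence $[h_U,s_U]_{\alpha_U}=0$. The second term vanishes because $f$ is a first integral of the system \eqref{eq2h}, so $X_h(f)\equiv 0$. Since this holds on every chart of a covering of $M$, the section $[h,fs]\in\Gamma(L)$ is globally zero, i.e.\ $fs$ is a symmetry of $h$. Replacing $s$ by $h$ in the same computation gives $[h,fh]=0$.

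I do not anticipate any real obstacle: the proof is essentially the observation that the Jacobi bracket is a first-order differential operator in each argument, so multiplying one slot by a function $f$ produces exactly the extra term $X_h(f)\cdot(\cdot)$ that is killed by the hypothesis $X_h(f)=0$. The only thing to check carefully is that $fs$ is indeed a well-defined section of $L$, i.e.\ that $(fs)_U = g_{UV}(fs)_V$ on $U\cap V$; this is immediate from $s_U=g_{UV}s_V$ and the fact that $f$ is an honest global function on $M$ and therefore satisfies $f\vert_U=f\vert_V$ on overlaps.
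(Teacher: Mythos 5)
Your proof is correct and follows essentially the same route as the paper's: both work in a local chart, apply the local formula \eqref{JBN}, expand $X_h(f\,s_U)$ by the Leibniz rule, and kill the two resulting terms using $[h,s]=0$ and $X_h(f)=0$. If anything, you are slightly more explicit than the paper in isolating the cross term $s_U X_h(f)$ and in verifying that $fs$ is a well-defined section of $L$.
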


\begin{proof}
Consider the local contact local chart $(U,\alpha)$ in which $h$ is represented by $f_1$ and $s$ by $f_2$. Then
\[
X_{f_1}(f)=0 \qquad \text{and} \qquad [f_{1},f_{i}]_\alpha = X_{f_{1}}(f_{i})-f_{i}Z_\alpha (f_{1})=0, \qquad i=1,2.
\]

From this follows,
\[
[f_{1},f f_{i}]_\alpha = X_{f_{1}}(f f f_{i})-f f_{i}Z_\alpha (f_{1})=f\big(X_{f_{1}}(f_{i})-f_{i}Z_\alpha (f_{1})\big)=0, \qquad i=1,2.
\]
\end{proof}

Therefore, it is sufficient to consider a formulation of integrability that includes only contact symmetries, and Theorem \ref{LIN} provides unified geometrical setting that relates the contact integrability given in Definitions \ref{defZung} and \ref{defJov}.

\subsection{Technical lemma}

\begin{lem}\label{LEMA}
Let $(M,\alpha)$ be a cooriented contact manifold and let $f_1,\dots,f_r\in\mathcal A(M)$ be arbitrary smooth functions.

(i) The vector fields
$Z_{\alpha},X_{f_1},\dots,X_{f_r}$ are independent if and only if
\begin{equation}\label{lem1}
\alpha\wedge df_1\wedge df_2\wedge \dots\wedge df_r\ne 0.
\end{equation}

(ii) If $df_1\wedge df_2\wedge \dots\wedge df_r\ne 0\vert_x$
then $\dim\Span\{X_{f_1}(x),\dots,X_{f_r}(x)\}$ is either $r$ or $r-1$, while
$\dim\Span\{Z_\alpha(x),X_{f_1}(x),\dots,X_{f_r}(x)\}$ is either $r+1$ or $r$.
 In particular, if $f_1,\dots,f_r$
are integrals of the Reeb flow, then
\[
\dim\Span\{X_{f_1}(x),\dots,X_{f_r}(x)\}=r, \quad \dim\Span\{Z_\alpha(x),X_{f_1}(x),\dots,X_{f_r}(x)\}=r+1.
\]
%
\end{lem}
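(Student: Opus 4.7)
The whole lemma reduces to the decomposition formula recalled in the paper,
\[
X_f=fZ_\alpha+\alpha^\sharp(\widehat{df}),\qquad \widehat{df}=df-df(Z_\alpha)\alpha\in\Gamma(\mathcal Z^0),
\]
together with the fact that $\alpha^\sharp\colon\Gamma(\mathcal Z^0)\to\Gamma(\mathcal H)$ is an isomorphism while $Z_\alpha$ is transverse to $\mathcal H$. So the plan is to translate every statement about the $X_{f_i}$ into a statement about the 1-forms $df_i$ and $\alpha$ at the point $x$.

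For part (i), I would take a linear combination $\lambda_0 Z_\alpha+\sum_{i=1}^r\lambda_i X_{f_i}=0$ at $x$, substitute the decomposition, and project onto the $\mathbb R Z_\alpha$-summand and onto $\mathcal H$ using $TM=\mathcal Z\oplus\mathcal H$. This yields two independent scalar/covector equations
\[
\lambda_0+\sum_i\lambda_i f_i=0,\qquad \sum_i\lambda_i\widehat{df_i}=0,
\]
the latter being equivalent (by the definition of $\widehat{df_i}$ and by evaluating on $Z_\alpha$) to $\sum_i\lambda_i df_i=c\alpha$ at $x$ for some $c\in\mathbb R$. A nontrivial solution forces $(\lambda_1,\dots,\lambda_r)\ne 0$ (otherwise $\lambda_0$ vanishes too), and this happens precisely when $\alpha,df_1,\dots,df_r$ are linearly dependent at $x$, i.e.\ $\alpha\wedge df_1\wedge\dots\wedge df_r=0$ at $x$. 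This proves both directions.

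For part (ii), assume $df_1\wedge\dots\wedge df_r\ne 0$ at $x$ and analyze the same system without $Z_\alpha$: a relation $\sum\lambda_i X_{f_i}(x)=0$ forces $\sum\lambda_i f_i=0$ and $\sum\lambda_i df_i=c\alpha$ at $x$. The independence of the $df_i$ makes the set of $(\lambda_1,\dots,\lambda_r,c)$ solutions to the second equation either trivial (when $\alpha\notin\Span\{df_1,\dots,df_r\}_x$) or one–dimensional (when $\alpha\in\Span\{df_i\}_x$, with a unique $(\mu_1,\dots,\mu_r)$ up to scale satisfying $\sum\mu_i df_i=\alpha$). Imposing the extra scalar condition $\sum\lambda_i f_i=0$ then cuts this kernel down by at most one, so the dimension of $\Span\{X_{f_1}(x),\dots,X_{f_r}(x)\}$ is $r$ or $r-1$. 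For the span with $Z_\alpha$ added, I would check when $Z_\alpha$ lies in $\Span\{X_{f_i}(x)\}$: writing $Z_\alpha=\sum\mu_i X_{f_i}$ demands $\sum\mu_i df_i=c\alpha$ and $\sum\mu_i f_i=1$, which is only possible when $\alpha\in\Span\{df_i\}_x$ and the associated $\mu_i$'s satisfy $\sum\mu_i f_i\ne 0$. A short case split on whether $\alpha$ lies in the span of the $df_i$'s then yields the stated alternatives $r+1$ or $r$ for the dimension including $Z_\alpha$.

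For the final statement about Reeb integrals, if each $f_i$ satisfies $Z_\alpha(f_i)=df_i(Z_\alpha)=0$, then a putative relation $\alpha=\sum c_i df_i$ would give $1=\alpha(Z_\alpha)=\sum c_i df_i(Z_\alpha)=0$, a contradiction; hence $\alpha\wedge df_1\wedge\dots\wedge df_r\ne 0$ at $x$, and the two maximal dimensions follow from parts (i) and (ii). The main obstacle in writing this up cleanly is the bookkeeping in part (ii)—the two ``alternative'' dimensions correspond to two independent binary choices ($\alpha\in\Span\{df_i\}_x$? and, if so, $\sum\mu_i f_i=0$?), and one must verify that the resulting four combinations collapse to the claimed pair of possibilities $(r,r+1)$, $(r,r)$, $(r-1,r)$. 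Everything else is direct linear algebra built on the decomposition $TM=\mathcal Z\oplus\mathcal H$.
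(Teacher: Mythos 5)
Your argument is correct and follows essentially the same route as the paper: both rest on the decomposition $X_f=fZ_\alpha+\alpha^\sharp\bigl(df-df(Z_\alpha)\alpha\bigr)$ together with the fact that $\alpha^\sharp$ is an isomorphism onto $\Gamma(\mathcal H)$, reducing everything to pointwise linear algebra on the covectors $\alpha, df_1,\dots,df_r$. The only cosmetic difference is in part (ii): the paper organizes the case analysis around the space $\Span\{\widehat{df_1}(x),\dots,\widehat{df_r}(x)\}$ and whether $Z_\alpha(x)$ lies in $\Span\{X_{f_1}(x),\dots,X_{f_r}(x)\}$, whereas you split on whether $\alpha$ lies in $\Span\{df_1,\dots,df_r\}$ at $x$ and on the value of $\sum_i\nu_i f_i(x)$ for the resulting dependency; the two bookkeepings are dual to one another and collapse to the same three admissible dimension pairs.
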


\begin{proof}
(i) Let us assume that there exist $a_0,\dots, a_r\in\R$, $a_0^2+a_1^2+\dots+a_r^2\ne 0$ such that
\[
a_0Z_{\alpha}(x)+a_1 X_{f_1}(x)+\dots+a_r X_{f_r}(x) = 0.
\]
Then
\[
(a_0+a_1+\dots+a_r)Z_{\alpha_r}(x)+\alpha^\sharp(a_1\hat{df_1}(x)+\dots+a_r \hat{df_r}(x)) =0, \quad \hat{df_i}=df_i-df_i(Z_\alpha)\alpha,
\]
i.e. $a_0+a_1+\dots+a_r = 0$ and, since the linear mapping $\alpha^\sharp\vert_x: \mathcal Z^0_x\to \mathcal H_x$ is a bijection,
\begin{equation}\label{lem2}
a_1\hat{df_1}(x)+\dots+a_r \hat{df_r}(x)=0 \quad \Longleftrightarrow \quad
\hat{df_1}\wedge \hat{df_2}\wedge \dots\wedge \hat{df_r}\vert_x = 0.
\end{equation}

Therefore, the contact vector fields $Z_{\alpha},X_{f_1},\dots,X_{f_r}$ are independent if and only if
\eqref{lem1} holds.

\medskip

(ii) Let $f_1,\dots,f_r$ be independent at $x$,
\[
df_1\wedge df_2\wedge \dots\wedge df_r\ne 0\vert_x,
\]
and let
\[
\dim K_x=l, \quad \text{where} \quad K_x=\Span\{\hat{df_1}(x),\dots,\hat{df_r}(x)\}.
\]
Then $l=r$ or $l=r-1$.

Let
\[
\hat{\mathcal K}_x=\alpha^\sharp\vert_x(K_x)\subset\mathcal H_x,  \qquad \mathcal K_x=\Span\{X_{f_1}(x),\dots,X_{f_r}(x)\}.
\]
Then $\dim\mathcal K_x\le r$ and
since $\alpha^\sharp\vert_x$ is an isomorphism between $\mathcal Z^0_x$ and $\mathcal Z_x$,
$\dim\hat{\mathcal K}_x=l$.

If $Z_\alpha(x)\in \mathcal K_x$, then
$\dim \mathcal K_x=\dim \hat{\mathcal K}_x+1=l+1$, and this is only possible if $l=r-1$. Therefore, in this case we have
\[
\dim\Span\{Z_{\alpha}(x),X_{f_1}(x),\dots,X_{f_r}(x)\}=\dim\mathcal K_x=r.
\]
Note also that in this case there exist $a_1,\dots,a_r\in\R$, so that
\[
a_1 \hat{df}_1(x)+\dots+a_r \hat{df}_r(x)=0, \qquad a_1 f_1(x)+\dots+a_r f(x)\ne 0.
\]

If $Z_\alpha(x) \notin \mathcal K_x$, then $\dim\mathcal K_x=\dim\hat{\mathcal K}_x=l$.
There are two possibilities: $l=r-1$ or $l=r$.
If $l=r-1$, then there are $a_1,\dots,a_r\in\R$, $a_1^2+\dots+a_r^2\ne 0$, so that
\[
a_1 \hat{df}_1(x)+\dots+a_r \hat{df}_r(x)=0, \qquad a_1 f_1(x)+\dots+a_r f(x)= 0
\]
and
\[
\dim\Span\{Z_{\alpha}(x),X_{f_1}(x),\dots,X_{f_r}(x)\}=\dim\mathcal K_x+1=l+1=r.
\]

If $l=r$, then
\begin{equation}\label{nezavisnost}
\dim\Span\{Z_{\alpha}(x),X_{f_1}(x),\dots,X_{f_r}(x)\}=r+1.
\end{equation}

In particular, if the functions $f_1,\dots,f_r$ are the first integrals of the Reeb flow ($df_i=\hat{df_i}$, $i=1,\dots,r$), then $l=r$, $\dim\mathcal K_x=\dim\hat{\mathcal K}_x=r$, and \eqref{nezavisnost} holds.
\end{proof}

\section{Foliations and contact normal forms}\label{sec4}

\subsection{Foliations}

As above, we denote by $\mathcal F$ the foliation determined by commutative contact symmetries $\mathcal X$, and by $\mathcal E$ the distribution determined by contact symmetries $\mathcal Y$.

The invariant set
\[
\Sigma=(s_0)\cap \dots\cap (s_r) \cap M_{reg}
\]
is either an empty set or a $p$--dimensional submanifold of $M_{reg}$. In the latter case, the inequality $r < n < p$ must be satisfied.
We note that the dynamics on $\Sigma$ is also linearized. If the level sets of $\pi$ on $M_{reg}$ are compact, $\Sigma$ is folded on invariant $(r+1)$-dimensional tori.

\begin{thm}\label{foliacije}
Let us assume that the contact system \eqref{eq2h} is integrable in the sense of Theorem \ref{LIN}. Then:

(i) $\mathcal F$ is $\alpha_{s_i}$-complete pre-isotropic foliation on $(M_{reg,i},\alpha_{s_i})$, for $i=0,\dots,r$.
In the case that the momentum mapping $\pi$ is proper, $(M_{reg}\setminus\Sigma,\mathcal X,\mathcal H)$
 is a complete pre-isotropic contact structure.

(ii) The set $\Sigma$ can be defined as a set where the fibers of $\pi$ are not transversal to the contact distribution $\mathcal H$:
\begin{equation}\label{sig1}
\Sigma=\{x\in M_{reg} \, \vert\, \mathcal F_x \subset \mathcal H_x\}
\end{equation}

(iii)
The distribution $\mathcal E$ is a regular $p+1$-dimensional foliation on $M_{reg}\setminus\Sigma$ and
$\Sigma$ is a singular leaf of $\mathcal E$: $T_x\Sigma=\mathcal E_x$,
\begin{equation}\label{sig2}
\Sigma=\{x\in M_{reg} \, \vert\, \dim\mathcal E_x = p\}.
\end{equation}
\end{thm}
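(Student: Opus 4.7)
The plan is to reduce everything to the local setting of Theorem \ref{teo1}: on each affine chart $M_{reg,i}$ with $i\le r$, the pair $(\alpha_{s_i},\{s_l/s_i\}_{l\ne i})$ satisfies the hypotheses of that theorem, after which I globalize via Proposition \ref{STAV} to identify $\mathcal{E}=\mathcal{F}^\perp$ and transfer $\alpha$-completeness into integrability of $\mathcal{E}$. The only genuinely nontrivial step is computing $\dim\mathcal{E}_x$ on the singular locus $\Sigma$.

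For (i), fix $i\in\{0,\dots,r\}$ and pass to $(M_{reg,i},\alpha_{s_i})$, where $Z_{\alpha_{s_i}}=X_i$ and $X_l\vert_{M_{s_i}}=X_{s_l/s_i}$. The commutation $[s_i,s_l]=0$ gives $[1,s_l/s_i]_{\alpha_{s_i}}=0$ for all $l\ne i$, and $[s_k,s_l]=0$ for $k\le r$ gives $[s_k/s_i,s_l/s_i]_{\alpha_{s_i}}=0$ whenever $k$ or $l$ lies in $\{0,\dots,r\}\setminus\{i\}$. After relabelling the $p$ functions $s_l/s_i$ so that the first $r$ correspond to indices $k\le r$, $k\ne i$, the involution relations \eqref{involucija} hold and Theorem \ref{teo1} yields $\alpha_{s_i}$-completeness and pre-isotropy of $\mathcal{F}$ on $M_{reg,i}$. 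Since pre-isotropy is intrinsic to $(\mathcal{H},d\alpha\vert_\mathcal{H})$ and the charts $\{M_{reg,i}\}_{i=0}^r$ cover $M_{reg}\setminus\Sigma$, the property globalizes; when $\pi$ is proper, the submersion required by Definition \ref{defJov} is in place.

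Item (ii) is immediate: $\mathcal{F}_x\subset\mathcal{H}_x$ is equivalent to $\alpha_U(X_k)(x)=s_{kU}(x)=0$ for every $k\le r$, which is precisely $x\in\Sigma$. For (iii) on $M_{reg}\setminus\Sigma$, applying Lemma \ref{LEMA}(ii) to the $p$ Reeb integrals $s_l/s_i$ in any chart with $i\le r$ and $s_i(x)\ne 0$ gives $\dim\mathcal{E}_x=p+1$; integrability follows from the inclusion $\mathcal{E}\subset\mathcal{F}^\perp$ (since $Z_{\alpha_{s_i}}=X_i$ and the remaining $X_l$ are Hamiltonian lifts of integrals of $\mathcal{F}$), the dimension count $\dim\mathcal{F}^\perp=p+1$ forcing equality, and Proposition \ref{STAV}(ii).

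Tangency of every $X_k$ to $\Sigma$ follows from the identity $X_k(s_l/s_j)=(s_l/s_j)\,Z_{\alpha_{s_j}}(s_k/s_j)$, valid at any $x$ with $s_j(x)\ne 0$: when $k\le r$ or $l\le r$, one factor on the right vanishes on $\Sigma$, and these cases exhaust all pairs. Hence $\mathcal{E}_x\subset T_x\Sigma$ and $\dim\mathcal{E}_x\le p$. The main obstacle is the reverse inequality. Since $\Sigma\cap M_\mathbf{0}=\emptyset$, choose $l>r$ with $s_l(x)\ne 0$ and set $y_k=s_k/s_l$ on $M_{reg,l}$. From $X_{y_k}=y_k Z_{\alpha_{s_l}}+\alpha_{s_l}^\sharp(\hat{dy_k})$ one reduces
\[
\mathcal{E}_x=\R\, Z_{\alpha_{s_l}}(x)\oplus\alpha_{s_l}^\sharp\bigl(\Span\{\hat{dy_k}(x)\mid k\ne l\}\bigr),
\]
so $\dim\mathcal{E}_x=1+\dim K_x$ with $K_x=\Span\{\hat{dy_k}(x)\}_{k\ne l}$. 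The $p$ differentials $dy_k(x)$ are independent, so $\dim K_x\in\{p-1,p\}$, with $\dim K_x=p-1$ iff $\alpha_{s_l}(x)\in\Span\{dy_k(x)\}_{k\ne l}$. The annihilator of this span in $T_xM$ is precisely $\mathcal{F}_x$ (these $dy_k$ cut out the fiber of $\pi$), so $\alpha_{s_l}(x)$ lies in the span iff $\mathcal{F}_x\subset\mathcal{H}_x$; by (ii), this happens exactly when $x\in\Sigma$. Therefore $\dim\mathcal{E}_x=p$ on $\Sigma$ and $\mathcal{E}_x=T_x\Sigma$, giving the characterization \eqref{sig2}.
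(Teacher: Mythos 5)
Your proof is correct and follows essentially the same route as the paper: reduction to Theorem \ref{teo1} on the charts $M_{reg,i}$, $i\le r$, for (i), the observation that $X_s$ is tangent to $\mathcal H$ exactly where $s$ vanishes for (ii), and the dimension count via Lemma \ref{LEMA} for (iii). Your explicit verification that $\dim\mathcal E_x\ge p$ on $\Sigma$ (via the decomposition $\mathcal E_x=\R Z_{\alpha_{s_l}}(x)\oplus\alpha_{s_l}^\sharp(K_x)$ and the identification of the annihilator of $\Span\{dy_k(x)\}$ with $\mathcal F_x$) supplies a step the paper's proof leaves implicit, and is a welcome addition.
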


\begin{proof}
(i) Let $x\in M_{reg}\setminus \Sigma$. Then $s_i(x)\ne 0$ and $x\in M_{reg,i}$ for any index $i$, $0\le i\le r$.
The Reeb vector field on $(M_{reg,i},\alpha_i)$ is $X_{s_i}$ and the mapping \eqref{PIi} satisfies the conditions of Theorem \ref{teo1}: $\mathcal F$ is $\alpha_{s_i}$-complete pre-isotropic foliation on $(M_{reg,i},\alpha_{s_i})$ (for this property we do not need the map \eqref{PIi} to have compact fibers).

\medskip

(ii) The identity \eqref{sig1} is clear -- the contact vector field $X_s$ is tangent to $\mathcal H$ at $x$ if and only if $s(x)=0$.

\medskip

(iii) It follows from (i) that for $i=0,\dots,r$, $\mathcal E\vert_{M_{reg,i}}$ is a foliation (see Proposition \ref{STAV}). Also
\begin{equation}\label{dex}
\dim\mathcal E_x=p+1, \qquad x\in M_{reg}\setminus \Sigma.
\end{equation}

On the other hand, since the contact vector fields $X_{s_0}, \dots, X_{s_p}$ are tangent to $\Sigma$ (set $s=s_i$, $h=s_j$, $i=0,\dots,r$, $j=0,\dots,p$ in Lemma \ref{pomocnaLema}),
due to \eqref{dex} we can define $\Sigma$ by \eqref{sig2}.
In particular, $\Sigma$ is a singular leaf of foliation $\mathcal E$: $T_x\Sigma=\mathcal E_x$.
\end{proof}

In the case that $r=p=n$, then $\Sigma=\emptyset$ and we have \emph{commutative contact integrability}: $\mathcal F=\mathcal E$ is a pre-Legendrian foliation on $M_{reg}$.
If the momentum mapping $\pi$ is proper, then $(M_{reg},\mathcal X,\mathcal H)$ is a complete pre-Legendrian contact structure.

Theorems \ref{LIN} and \ref{foliacije} provide a global framework that explains how transversal and non-transversal cases are related.
With the symmetries $\mathbf Y$ satisfying the conditions of  Theorem \ref{LIN}, we associate foliations $\mathcal F$ and $\mathcal E$,
\[
\mathcal F\subset \mathcal E,
\]
generated by commuting symmetries and by all symmetries, respectively. Then we have the decomposition of $M$
in terms of foliations described in Table \ref{TAB}.
\begin{table}[h]
\begin{tabular}{|c||c|c|c|c|}
\hline $M_{reg}\setminus \Sigma$ & $\mathcal F_x \,\, \text{is transverse to}\,\, \mathcal H_x$ &  $\dim \mathcal E_x=p+1$  & $\dim \mathcal F_x= r+1$ &  \\
\hline $\Sigma$ & $\mathcal F_x \subset \mathcal H_x$ &  $\dim \mathcal E_x=p$  & $\dim \mathcal F_x= r+1$ & $T_x\Sigma =\mathcal E_x$ \\
\hline  $M_{\mathbf 0, reg}$ & $\mathcal F_x\subset \mathcal E_x \subset \mathcal H_x$ &  $\dim \mathcal E_x=p$  & $\dim \mathcal F_x= r$ & $T_xM_{\mathbf 0,reg} =\mathcal F_x$  \\
\hline
\end{tabular}\vspace{0.5em}
\caption{The decomposition of $M$ in terms of foliations $\mathcal F\subset \mathcal E$.}\label{TAB}
\end{table}

\begin{cor}
Let us assume that the contact system \eqref{eq2h} is integrable in the sense of Theorem \ref{LIN} with compact regular invariant manifolds. Then the contact manifold $M$ is almost everywhere foliated on invariant tori $T$, which are
\begin{itemize}
\item[(i)] pre-isotropic and of dimension $r+1$ for $T\subset M_{reg}\setminus \Sigma$;
\item [(ii)] isotropic and of dimension $r+1$ for $T\subset \Sigma$ ($\dim\Sigma=p=2n-r$);
\item[(iii)] isotropic and of dimension $r$ for $T\subset M_{\mathbf 0, reg}$ ($\dim M_{\mathbf 0, reg}=r$).
\end{itemize}
\end{cor}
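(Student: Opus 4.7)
The plan is to deduce each item by combining the dimension/topology statements from Theorem \ref{LIN} with the transversality/tangency information collected in Theorem \ref{foliacije} and Table \ref{TAB}, using compactness to upgrade $\mathbb{T}^l\times\mathbb R^{r+1-l}$ (resp.\ $\mathbb{T}^l\times\mathbb R^{r-l}$) to a full torus.

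For item (i), I would simply observe that on $M_{reg}\setminus\Sigma$ the foliation $\mathcal F$ has rank $r+1$ (Theorem \ref{LIN}(i)), its leaves are connected components of $\pi^{-1}(\mathbf c)\cap M_{reg}$, and by Theorem \ref{foliacije}(i) the restriction of $\mathcal F$ to each chart $(M_{reg,i},\alpha_{s_i})$ is $\alpha_{s_i}$-complete pre-isotropic. Under the compactness hypothesis each connected component $M_{\mathbf c}^0$ is a compact subset of $\mathbb{T}^l\times\mathbb R^{r+1-l}$, forcing $l=r+1$; hence every leaf contained in $M_{reg}\setminus\Sigma$ is an $(r+1)$-torus, and it is pre-isotropic because pre-isotropy is a pointwise condition already established locally.

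For items (ii) and (iii), the key observation is that the contact Hamiltonian fields $X_{s_0},\dots,X_{s_r}$ vanish nowhere, but at points where some $s_i$ itself vanishes they lie in $\mathcal H$. More precisely, on any chart $(U,\alpha_U)$ one has $X_{s_i}|_U=s_{iU}Z_{\alpha_U}+\alpha_U^\sharp(\widehat{ds_{iU}})$, so at $x$ with $s_{iU}(x)=0$ the field $X_{s_i}(x)\in\mathcal H_x$. On $\Sigma$ all $s_0,\dots,s_r$ vanish, and on $M_{\mathbf 0,reg}$ all $s_0,\dots,s_p$ vanish, so in both cases $\mathcal F_x\subset\mathcal H_x$. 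To upgrade tangency to isotropy I would evaluate $d\alpha_U$ on two spanning fields using \eqref{JBN} rewritten as
\begin{equation*}
d\alpha_U(X_{s_i},X_{s_j})=[s_i,s_j]_U-s_{iU}Z_{\alpha_U}(s_{jU})+s_{jU}Z_{\alpha_U}(s_{iU}).
\end{equation*}
The Jacobi-commutation $[s_i,s_j]=0$ kills the first term globally, and on $\Sigma$ (resp.\ $M_{\mathbf 0,reg}$) the relations $s_{iU}(x)=s_{jU}(x)=0$ kill the remaining two, so $d\alpha_U$ vanishes on $\mathcal F_x$; this is exactly isotropy of $\mathcal F_x$ in $(\mathcal H_x,d\alpha_U|_{\mathcal H})$.

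The dimension and torus statements then follow: on $\Sigma$ the foliation $\mathcal F$ still has rank $r+1$ by Theorem \ref{LIN}(i) (since $\Sigma\subset M_{reg}$), and compactness of $\pi$-fibres again forces the $\mathbb{T}^l\times\mathbb R^{r+1-l}$ model to be $\mathbb{T}^{r+1}$; on $M_{\mathbf 0,reg}$ Theorem \ref{LIN}(ii) gives rank $r$ and the same compactness argument yields $\mathbb{T}^r$. The dimension counts $\dim\Sigma=p$ and $\dim M_{\mathbf 0,reg}=r$ are already in Theorems \ref{foliacije} and \ref{LIN}. I expect no serious obstacle; the only subtle point is checking that ``isotropic in $\mathcal H$'' really makes sense at points of $\Sigma\cup M_{\mathbf 0,reg}$ without a preferred contact form, and this is precisely handled by the coordinate-independent computation above (the two correction terms are individually non-tensorial, but their sum is, and vanishes when the $s_i$'s do).
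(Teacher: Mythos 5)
Your proposal is correct and follows essentially the route the paper intends: the corollary is a direct combination of Theorem \ref{LIN} (dimensions and the $\mathbb T^l\times\R^{k-l}$ models, with compactness forcing $l=k$), Theorem \ref{foliacije} and Table \ref{TAB} (transversality on $M_{reg}\setminus\Sigma$, tangency $\mathcal F_x\subset\mathcal H_x$ on $\Sigma$ and $M_{\mathbf 0,reg}$). Your explicit verification of isotropy via $d\alpha_U(X_{s_i},X_{s_j})=[s_i,s_j]_U-s_{iU}Z_{\alpha_U}(s_{jU})+s_{jU}Z_{\alpha_U}(s_{iU})$, together with the remark that this is well defined independently of the local contact form, is exactly the computation left implicit in the paper (it mirrors Lemma \ref{pomocnaLema} and the proof of Theorem \ref{LIN}(ii)) and closes the only point that needed checking.
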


\subsection{Contact normal forms}

Let $M_{\mathbf c}^0$ be an invariant $(r+1)$-dimensional
pre-isotropic  torus which is contained in $M_{reg}\setminus \Sigma$, i.e
 $M_{\mathbf c}^0\subset M_{reg,i}$ for an index $i$, $0\le i\le r$.
Then we can apply Theorem \ref{teo1}, with the integrals $f_1,\dots,f_p$ are given by
\[
(f_1,\dots,f_p)=(\phi_{s_i}(s_0),\dots,\phi_{s_i}(s_{i-1}),\phi_{s_i}(s_{i+1}),\dots,\phi_{s_i}(s_p)).
\]

Thus, there is a $\pi$-invariant neighborhood $U$ of $M_{\mathbf c}^0$ with coordinates
\[
(\varphi_0,\dots,\varphi_r,I_1,\dots,I_r,q_1,\dots,q_s,p_1,\dots,p_s)
\]
where the contact form $\alpha_{s_i}$ has the form \eqref{canonical*}.

Consider the case where $ \Sigma \ne \emptyset $ and an invariant
$(r+1)$-dimensional isotropic torus $M_{\mathbf c}^0$ which is a subset of $\Sigma$ ($r\le n-1$). There is an index
$i$, $r+1\le i \le p$, so that $M_{\mathbf c}^0$ is the subset of $M_{reg,i}$.
For example, we can set $i=p$.
It follows that there are $\pi$-invariant neighborhoods
 $U\subset M_{reg,p}$ of $M_{\mathbf c}^0$ such that
\[
\pi_p=\vartheta_p\circ\pi=(\phi_{s_p}(s_0),\dots,\phi_{s_p}(s_{p-1}))\colon U\to \R^p
\]
is a toric fibration with fibers generated by the abelian Lie algebra of contact symmetries $\mathcal X=\Span\{X_{s_0},\dots,X_{s_r}\}$. Then, as follows from \cite{Zung}, subsection 4.4, by possibly shrinking of $U$, there exist coordinates
\[
\phi=(\theta_0,\dots,\theta_r,z_0,\dots,z_r,x_1,\dots,x_s,y_1,\dots,y_{s-1}): U \longrightarrow \T^{r+1}\times D
\]
and a local contact form $\alpha_U$ defining $\mathcal H\vert_U$ so that the integrals depend on $x,y,z$ and
\begin{equation}\label{zung2}
\alpha_U=z_0d\theta_0+z_1d\theta_1+\dots+z_rd\theta_r+y_1dx_1+\dots+y_{s-1}dx_{s-1}+dx_s.
\end{equation}
Here, the coordinates $z_0,\dots,z_r$ are equal to zero at $\Sigma\cap U$.

To illustrate Theorems \ref{LIN} and \ref{foliacije} we give the following simple example.

\begin{exm}\label{Primer}
It is known that the projective space bundle of the cotangent bundle carries the natural contact structure.
The contact structure over the torus $\mathbb T^{n+1}$
can be described as follows.
Consider the projectivization of the cotangent bundle 
\[
M=\mathbb T^{n+1}(\varphi_0,\dots,\varphi_n)\times\mathbb{RP}^{n}([y_0,\dots,y_{n}]),
\]
 where
$[y_0,\dots,y_{n}]$ are homogeneous coordinates on the projective space $\mathbb{RP}^{n}$. The contact structure in toroidal domains
\begin{align*}
&\psi_i\colon V_i=M\setminus \{y_i=0\}\longrightarrow \mathbb T^{n+1}\times \mathbb R^{n}, \\
&\psi_i(\varphi_0,\dots,\varphi_n,[y_0,\dots,y_{n}])=(\varphi_0,\dots,\varphi_n,J_0^i,\dots,J_{i-1}^i,J_{i+1}^i,\dots,J_{n}^i),\quad J^i_j=\frac{y_{j}}{y_i}
\end{align*}
is defined by local contact forms
\begin{align*}
\alpha_{V_i}=\sum_{j=0}^{n} J^i_j d\varphi_j, \qquad J^i_i\equiv 1, \qquad i=0,\dots,n.
\end{align*}

The transition functions of the contact line bundle $L$  are
$s_{V_iV_j}={y_j}/{y_i}=J^i_j$ (the pull back of the tautological line bundle over $\mathbb{RP}^n$).

Consider the standards sections $s_0,\dots,s_{n}$ of $L$, which are locally defined by
$
s_j\vert_{V_i}=J^i_j
$
and the contact flow $X_h$ determined by the section
$h=\omega_0 s_0+\dots+\omega_{n-1} s_{n-1}$, $\omega_j=const$. The contact system in the chart $V_i$ is
\[
\dot\varphi_0=\omega_0,\, \dots, \, \dot\varphi_{n-1}=\omega_{n-1}, \, \dot \varphi_n=0, \, \dot J^i_0=0,\,\dots,\dot J^{i}_{i-1}=0,\,\dot J^{i}_{i+1}=0, \,
\dots, \dot J^{i}_{n}=0.
\]

According to Proposition \ref{Stav}, the system has symmetries
$\{s_0,\dots,s_{n}, f_0 s_0,\dots, f_n s_{n}\}$,
where $f_i$ are arbitrary smooth functions on $M$ that depend only on the angle variable $\varphi_n$.
However, not all of them induce independent integrals.
Therefore, we fix $k$, $0\le k\le n$, and consider the symmetries
\[
\mathbf Y=\Span \{s_0,s_1,\dots,s_{n}, f s_k\},
\]
together with the associated momentum map
\[
\pi(x)=[s_0(x),s_1(x),\dots,s_{n}(x), f(x) s_k(x)], \qquad x\in M,
\]
and commuting symmetries $\mathbf X=\Span\{s_0,s_1,\dots,s_{n-1}\}\subset \mathbf Y$.
We assume that $f$ is analytic, only depends on $\varphi_n$, and is always greater than zero.

Then the conditions of Theorem \ref{LIN} with $r+1=n$, $p=n+1$ are fulfilled.
We have
\begin{align*}
& (s_i)=M\setminus V_i,\qquad i=0,\dots,n,\\
& (f s_{k})=(s_k)=M\setminus V_k.
\end{align*}

Thus the set $M_\mathbf 0$ is empty and the $(n+1)$-dimensional manifold $\Sigma=(s_0)\cap \dots \cap (s_{n-1})$ is a subset of $V_{n}$. On $V_{n}$ the momentum mapping has the form
\[
\pi\vert_{V_{n}}=[J^{n}_0,J^{n}_1,\dots,J^{n}_{n-1}, 1, f(\varphi_n) J^{n}_k],
\]
the contact form
$\alpha_{V_{n}}=J^{n}_0 d\varphi_0+\dots+ J^n_{n-1} d\varphi_{n-1}+d\varphi_n$ is already in the normal form \eqref{zung2}, and
\[
\Sigma=\{x\in V_{n}\,\vert\, J^{n}_0=\dots=J^{n}_{n-1}=0\}.
\]

Outside of $\Sigma$ we have integrability in the sense of Definition \ref{defJov}. For example, consider the domain $V_0$.
Then
\[
\pi\vert_{V_{0}}=[1,J^{0}_1,\dots,J^{0}_{n}, f(\varphi_n) J^{0}_k]
\]
and
$\alpha_{V_{0}}=d\varphi_0+J^{0}_1 d\varphi_1+\dots+J^0_n d\varphi_n$ has the contact normal form \eqref{canonical*}. From the momentum map we have integrals $\{J^{0}_1,\dots,J^{0}_{n}, f(\varphi_n)\}$
define the invariant $n$-dimensional pre-isotropic tori.
\end{exm}

\section{Commutative integrability on cooriented contact manifolds}\label{sec5}

We can formulate  Theorems  \ref{LIN} and \ref{foliacije} for cooriented contact manifolds $(M,\alpha)$, with the symmetries $s_i\in\Gamma(L)$ are replaced by the functions $f_i\in\mathcal A(M)$.
For the sake of simplicity, we consider the commutative case in which $\Sigma=\emptyset$.

Let us consider the contact Hamiltonian equation \eqref{eq_f}
with the Hamiltonian $h\in\mathcal A(M)$. We assume that we have $n+1$ contact symmetries $f_0=h,f_1,\dots,f_n\in\mathcal A(M)$ that are in involution:
\begin{equation}\label{involucija-c}
[f_i,f_j]_\alpha=0, \qquad i,j=0,\dots,n.
\end{equation}
Let
 \begin{equation}\label{NewMM}
\pi: M\setminus M_\mathbf 0 \longrightarrow \mathbb{RP}^n, \quad \pi(x)=[f_0(x),\dots,f_n(x)]
\end{equation}
is the associated momentum mapping, where $M_\mathbf 0$ is the zero level set:
\begin{equation}\label{ZERO}
M_\mathbf 0=\{x\in M\,\vert\, f_0(x)=0, f_1(x)=0,\dots, f_n(x)=0\}.
\end{equation}

As above, let $M_{reg}$ be an open subset of $M\setminus M_{\mathbf 0}$ in which the rank of the differential $d\pi(x)$ is equal to $n$
and let $M_{reg,i}$ be the invariant set in which the function $f_i$ is different from zero:
\[
M_{reg,i}= \{x\in M_{reg}\,\vert\, f_i(x)\ne 0\}\, \qquad i=0,\dots,n.
\]

Then the foliations $\mathcal F$ and $\mathcal E$ coincide
$
\mathcal F=\mathcal E,
$
and the symmetries are transversal to the contact structure $\mathcal H$ at $M_{reg}$ ($\Sigma=\emptyset$).
As already mentioned, if $\pi$ is proper, $(M_{reg},\mathcal H,\mathcal X)$ is a complete pre-Legendrian contact structure and the system is contact commutatively integrable in the sense of Definition \ref{defJov} (see also \cite{BM}).
In particular, in invariant domains $M_{reg,i}$, $i=0,\dots,n$ we have the situation as in Theorem \ref{teo1}.

We can summarize the commutative case in the next statement.

\begin{thm}\label{komutativna}
Let us consider the contact Hamiltonian equation \eqref{eq_f}
with the Hamiltonian $h\in\mathcal A(M)$ and assume that we have $n+1$ contact symmetries $f_0=h,f_1,\dots,f_n\in\mathcal A(M)$ which are in involution with respect to the Jacobi bracket \eqref{involucija-c} on $(M,\alpha)$ so that the associated fibration \eqref{NewMM} is proper.

(i) A commutative Lie algebra of contact symmetries $\mathcal X$ spanned by
$X_{f_0},\dots,X_{f_n}$ is tangent to the fibers of the momentum mapping \eqref{NewMM} and $\mathcal X$ is transversal to the contact structure $\mathcal H$ at $M_{reg}$:
$(M_{reg},\mathcal H,\mathcal X)$ a complete pre-Legendrian contact structure.

(ii) Let $M_{\mathbf c}^0$ be a regular connected component of the invariant level set
$M_{\mathbf c}=\pi^{-1}(\mathbf c)\cap M_{reg}$. It is diffeomorphic to a torus $\mathbb T^{n+1}$.
Let us assume that $M_{\mathbf c}^0$ belongs to the invariant set $M_{reg,i}$ ($c_i\ne 0$).
Then there is an open $\pi$-invariant neighborhood $U$ of $M_{\mathbf c}^0$ with Liouville coordinates
\begin{equation*}
(\varphi,I)=(\varphi_0,\varphi_1,\dots,\varphi_n,I_1,\dots,I_n),
\end{equation*}
so that the contact form $\alpha_{f_i}=\frac{1}{f_i}\alpha$ has the following canonical form
\begin{equation*}
\alpha_i=I_0d\varphi_0+I_1d\varphi_1+\dots+I_n d\varphi_n,
\end{equation*}
where the action integrals $I_k$ have the form
\[
I_k=I_k(f_0/f_i,\dots,f_{i-1}/f_i,f_{i+1}/f_i,\dots,f_n/f_i), \qquad k=0,\dots,n.
\]

The flow of $X_h$ on invariant tori is quasi-periodic
\begin{equation*}\label{komutativnoNamotavanje}
(\varphi_0,\varphi_1,\dots,\varphi_n) \longmapsto
(\varphi_0+t\omega_0,\varphi_1+t\omega_1,\dots,\varphi_n+t\omega_n).
\quad t\in\R,
\end{equation*}
The Hamiltonian $h$ and the frequencies $\omega_0,\dots,\omega_n$ depend only on the action variables $I_1,\dots,I_n$.

(iii) The contact symmetries $\mathcal X$ are also tangent to the zero level set \eqref{ZERO}.
Regular connected components of
$M_{\mathbf 0}$ are invariant Legendrian $n$-dimensional tori with linearized dynamics.
\end{thm}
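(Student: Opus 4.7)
The plan is to reduce the statement to the cooriented commutative specialization $p=r=n$ of Theorems \ref{LIN} and \ref{foliacije}, combined with Theorem \ref{teo1} applied chart-by-chart on each $M_{reg,i}$. In the cooriented case the contact line bundle $L$ is trivial, its sections are identified with smooth functions on $M$, and the Jacobi bracket on $\Gamma(L)$ reduces to $[\cdot,\cdot]_\alpha$; the symmetries $s_0,\dots,s_p$ of Theorem \ref{LIN} become the functions $f_0,\dots,f_n$. Because all $n+1$ symmetries Jacobi-commute, we have $r=p=n$, so
\[
\Sigma=(f_0)\cap\dots\cap(f_n)\cap M_{reg}\subset M_{\mathbf 0}\cap M_{reg}=\emptyset,
\]
and the two foliations of Theorem \ref{foliacije} coincide: $\mathcal F=\mathcal E$.

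For item (i), tangency of $\mathcal X$ to the fibers of $\pi$ is immediate from Theorem \ref{pomocna}(ii), since every ratio $f_k/f_i$ is an integral of each $X_{f_j}$. At any $x\in M_{reg}$ at least one $f_i(x)$ is nonzero, and $\alpha(X_{f_i})(x)=f_i(x)\ne 0$ gives transversality of $\mathcal X$ to $\mathcal H$. Lemma \ref{lemT} then shows that $\mathcal F$ is pre-isotropic, and since $\dim\mathcal F_x=n+1$ the intersection $\mathcal G_x=\mathcal F_x\cap\mathcal H_x$ is $n$-dimensional, hence automatically Lagrangian in the $2n$-dimensional symplectic vector bundle $(\mathcal H,d\alpha)$. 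Combined with the $\alpha_{f_i}$-completeness on each $M_{reg,i}$ furnished by Theorem \ref{foliacije}(i), properness of $\pi$ yields a complete pre-Legendrian contact structure.

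For item (ii), fix an index $i$ with $M_{\mathbf c}^0\subset M_{reg,i}$ and pass to the globally defined contact form $\alpha_{f_i}=\alpha/f_i$ on $M_{f_i}$. Then $X_{f_i}$ is the Reeb vector field of $\alpha_{f_i}$, and the functions $g_k=f_k/f_i$ ($k\ne i$) are pairwise in involution with respect to $[\cdot,\cdot]_{\alpha_{f_i}}$ and are first integrals of this Reeb flow, by the homomorphism property of $\phi_{f_i}$ discussed in Subsection \ref{CS->INT}. This places us in the hypotheses of Theorem \ref{teo1} with $r=n$, $p=n$, $s=0$ (with Hamiltonian $h=1$ when $i=0$, and $h=g_0$ after relabeling otherwise). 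Applying that theorem directly produces Liouville coordinates $(\varphi_0,\dots,\varphi_n,I_1,\dots,I_n)$ in which $\alpha_{f_i}$ takes the canonical form $I_0d\varphi_0+\dots+I_nd\varphi_n$; formula \eqref{actions} exhibits the actions as functions of the ratios $f_k/f_i$, and the quasi-periodic flow is inherited from the linearization in Theorem \ref{LIN}(i).

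For item (iii), tangency of each $X_{f_j}$ to $M_{\mathbf 0}$ is Lemma \ref{pomocnaLema}. At a regular point the differentials $df_0,\dots,df_n$ are independent, so any regular connected component of $M_{\mathbf 0}$ is $n$-dimensional; properness of $\pi$ combined with Theorem \ref{LIN}(ii) (with $r=n$, forcing $l=n$) then identifies each such component with $\mathbb T^n$ with linearized dynamics. On $M_{\mathbf 0}$ each $X_{f_k}$ lies in $\mathcal H$ because $\alpha(X_{f_k})=f_k=0$, and from $[f_i,f_j]_\alpha=0$ combined with $f_i=f_j=0$ one reads off $d\alpha(X_{f_i},X_{f_j})|_{M_{\mathbf 0}}=0$, so the tangent space to the torus is an $n$-dimensional isotropic subspace of $\mathcal H$, i.e.\ Legendrian. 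The principal bookkeeping obstacle is item (ii): one must verify that the Arnold--Liouville machinery, executed on the rescaled manifold $(M_{f_i},\alpha_{f_i})$, produces actions that are genuinely functions of the \emph{ratios} $f_k/f_i$, and that the resulting canonical form is independent, up to relabeling, of the auxiliary choice of chart index~$i$.
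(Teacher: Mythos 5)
Your proposal is correct and follows essentially the same route as the paper, which offers no separate argument for Theorem \ref{komutativna} beyond observing that it is the cooriented, commutative ($r=p=n$, $\Sigma=\emptyset$) specialization of Theorems \ref{LIN} and \ref{foliacije}, with the local normal form on each $M_{reg,i}$ supplied by Theorem \ref{teo1} applied to the rescaled form $\alpha_{f_i}$ and the ratios $f_k/f_i$. The only overstatement is the parenthetical claim in item (iii) that ``$r=n$ forces $l=n$'': Theorem \ref{LIN}(ii) alone gives $\mathbb T^l\times\R^{n-l}$, and compactness of the regular components of $M_{\mathbf 0}$ is not a formal consequence of properness of $\pi$ on $M\setminus M_{\mathbf 0}$ — but the paper's own statement makes the same implicit assumption, so this is consistent with, rather than a deviation from, the source.
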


The above formulation allows us to work with dissipative Hamiltonian systems where the Hamiltonian does not have to be preserved by the original Reeb vector field \cite{deLeon}.
On the other hand, note that the contact Hamiltonian vector field $X_h$ in the domain $M_{reg,0}$ is the Reeb vector field with respect to $\alpha_h=\frac{1}{h}\alpha$, while it commutates with the Reeb vector field $Z_{\alpha_{f_i}}$ in the domains $M_{reg,i}$ with respect to the contact forms $\alpha_{f_i}=\frac{1}{f_i}\alpha$, $i=1,\dots,n$.

Recently, the statement analogous to item (ii) of  Theorem \ref{komutativna} was given in \cite{Col}, with the proof based on the symplectization of contact manifolds and the Liouville-Arnold theorem for exact symplectic manifolds for commuting integrals which are homogeneous of degree 1.

\begin{exm}\label{Primer2}
We use the notation of Example \ref{Primer}. Here we also assume that $f$ is analytic, depends only on $\varphi_n$, but now with possible zeros. We consider Jacobi commuting sections
$\mathbf X=\Span\{s_0,\dots,s_{n-1},f s_{n}\}$ and
the Hamiltonian $h=\omega_0 s_0+\dots+\omega_{n-1} s_n+ f s_{n}\in \mathbf X$, $\omega_j=const$.
The zero-locus  $M_{\mathbf 0}$ is given by
\[
M_{\mathbf 0}=(s_0)\cap (s_1) \dots \cap (s_{n-1}) \cap (f),
\]
where $(f)=\mathbb T^{n}\{\varphi_0,\dots,\varphi_{n-1}\}\times \mathbb{RP}^n([y_0,\dots,y_n])\times \Xi$, $\Xi=\{\varphi_n\,\vert\, f(\varphi_n)=0\}$.
Thus
\[
M_{\mathbf 0}=\mathbb T^{n}\{\varphi_0,\dots,\varphi_{n-1}\}\times \{[0,\dots,0,y_n]\vert y_n\ne 0\}  \times \Xi \cong \mathbb{T}^n\times \Xi.
\]

We have the momentum mapping $\pi\colon M\setminus M_{\mathbf 0} \longrightarrow \mathbb{RP}^{n}$,
\[
\pi(x)=[s_0(x),\dots,s_{n-1}(x),f(x) s_{n}(x)].
\]

Since $M$ is compact, all contact Hamiltonian vector fields are complete and
the contact flow of $X_h$ is completely integrable in the sense of Theorem \ref{LIN}:
$M$ is foliated on $(n+1)$-dimensional
pre-Legendrian invariant manifolds within $M\setminus M_{\mathbf 0}$ and Legendrian invaiant manifolds within $M_{\mathbf 0}$.

The topology of invariant manifolds depends on the analytic function $f$.

Since $M_{\mathbf 0}$ is a subset of the domain $V_n$,
we consider the system within $V_{n}$ and denote
$p_j=J^{n}_j$, $j=0,\dots,n-1$.
Then the contact form reads
\[
\alpha=\alpha_{V_{n}}=p_0d\varphi_0+\dots+p_{n-1} d\varphi_{n-1}+d\varphi_n.
\]

Note that $(V_n,\alpha)$ can be considered as the product of cotangent bundle of a $n$-dimensional torus and a circle:
\[
N=T^*\mathbb T^{n}(p_0,\dots,p_{n-1},\varphi_0,\dots,\varphi_{n-1})\times S^1(\varphi_n)
\]
endowed with the canonical contact structure   (see Example \ref{canonical}).
Thus, for the Hamiltonian function $h=\omega_0 p_0+\dots+\omega_{n-1} p_{n-1}+ f(\varphi_n)$,
we get the following simple dissipative contact Hamiltonian system on $T^*\mathbb T^{n}\times S^1$
\begin{align*}
&\dot \varphi_i=\frac{\partial h}{\partial p_i}=\omega_i,\qquad \dot p_i=-\frac{\partial h}{\partial q_i}+p_i\frac{\partial h}{\partial \varphi_n}= \frac{df(\varphi_n)}{d\varphi_n} p_i, \qquad  i=0,\dots,n-1.\\
& \dot \varphi_n=h-\sum_{i=0}^{n-1} p_i\frac{\partial h}{\partial p_i}=f(\varphi_n),
\end{align*}
that fulfils Theorem \ref{komutativna} with the momentum mapping
\[
\pi=[p_0,p_1,\dots,p_{n-1},f(\varphi_n)]
\]
 defined in $N\setminus N_{\mathbf 0}$,
\[
N_{\mathbf 0}=\{p_0=0, \, p_1=0,\, \dots, \, p_{n-1}=0, \, f(\varphi_n)=0\}\cong \mathbb{T}^n\times \Xi.
\]

For example, in the domain where $f\ne 0$, we get Jacobi commuting integrals
\[
f_0=p_0/f, \, \dots, \, f_{n-1}=p_{n-1}/f,
\]
with respect to the contact form $\frac{1}{f}\alpha$.

If $f>0$, then $\Xi=M_{\mathbf 0}=N_{\mathbf 0}=\emptyset$ and $M$ is foliated on invariant pre-Legendrian  tori.
Within $N\subset M$, the tori are given as the level sets of integrals $f_0,\dots,f_{n-1}$.

On the other side, when $\Xi \ne \emptyset$, the manifold $M_{\mathbf 0}=N_{\mathbf 0}$ is the union of
invariant Legendrian  tori, but pre-Legendrian invariant manifolds within $M\setminus M_{\mathbf 0}$ (or within $N\setminus N_{\mathbf 0}$) are not compact.
\end{exm}

\subsection*{Acknowledgments}
I am grateful to the referee for valuable remarks.
The research was supported by the Project no. 7744592, MEGIC "Integrability
and Extremal Problems in Mechanics, Geometry and Combinatorics" of the Science Fund
of Serbia.

\end{document}